\documentclass[10pt]{article}

\usepackage{amsmath}
\usepackage{amssymb}
\usepackage{amsfonts}
\usepackage{amsthm}
\usepackage{latexsym}

\setlength\textwidth{170mm}
\setlength\textheight{246mm}
\addtolength\oddsidemargin{-25mm}
\addtolength\topmargin{-25mm}

\newtheorem{theorem}{Theorem}[section]
\newtheorem{lem}[theorem]{Lemma} 
\newtheorem{lemma}[theorem]{Lemma} 
\newtheorem{observation}[theorem]{Observation}

\newtheorem{prop}[theorem]{Proposition}

\newtheorem{claimA}{Claim \textrm{A}\hspace{-1.4mm}}
\newtheorem{claimB}{Claim \textrm{B}\hspace{-1.4mm}}

\newtheorem{claim}{Claim}

\theoremstyle{definition}

\theoremstyle{remark}

\newtheorem{cProofA}{Proof of Claim {\rm A}\hspace*{-1.4mm}} 
\newtheorem{cProofB}{Proof of Claim {\rm B}\hspace*{-1.4mm}} 

\newtheorem{cProof}{Proof of Claim}

\begin{document}

\title{Equimatchable factor-critical graphs and independence number 2}

\author{
Eduard Eiben\thanks{Institute of Computer Graphics and Algorithms (186), Vienna University of Technology (TU Wien), Favoritenstraße 9-11, A-1040 Vienna, Austria  (eduard.eiben@tuwien.ac.at)} 
\and
Michal Kotrb\v{c}\'ik\thanks{Department of Computer Science, Faculty of Informatics, Masaryk University, Botanick\'a 68a, 602 00 Brno, Czech Republic (kotrbcik@fi.muni.cz)
}
}

\maketitle

\begin{abstract}
A graph is equimatchable if each of  its matchings is a subset of a maximum matching. It is  known that any $2$-connected equimatchable graph is either bipartite, or factor-critical, and that these two classes are disjoint. This paper provides a description of $k$-connected  equimatchable factor-critical graphs with respect to their $k$-cuts for $k\ge 3$. 
As our main result we prove that if $G$ is a $k$-connected equimatchable factor-critical graph with at least $2k+3$ vertices and a  $k$-cut $S$, then $G-S$ has exactly two components and both these components are  close to being complete or complete bipartite. If both components of $G-S$ additionally have  at least $3$ vertices and $k\ge 4$, then the graph has independence number $2$. On the other hand, since every $2$-connected odd graph with independence number 2 is equimatchable,  we get the following result. For any $k\ge 4$ let $G$ be a  $k$-connected odd graph with at least $2k+3$ vertices and  a $k$-cut $S$ such that $G-S$ has two components  with  at least $3$ vertices. Then $G$ has independence number $2$ if and only if it is equimatchable and factor-critical. 
Furthermore, we show that a $2$-connected odd graph  $G$ with at least 4 vertices has independence number at most 2 if and only if $G$ is equimatchable and factor-critical and $G+e$ is equimatchable for every edge of the complement of $G$.
\end{abstract}

\noindent {\bf Keywords:} graph, matching, equimatchable, factor-critical, independence number, cut.\\
{\bf  AMS subject classification:}  05C70.

\section{Introduction}
A graph is \emph{equimatchable} if each of  its maximal matchings is maximum. 
Equimatchable graphs were introduced in \cite{grunbaum}, \cite{lewin},  and \cite{meng}; in particular Gr\"unbaum \cite{grunbaum}  asked  for a characterisation of all equimatchable graphs.
 If equimatchable graphs are  required to have a perfect matching, the answer turns out to be  fairly simple -- $K_{2n}$ and $K_{n,n}$ for all $n$ are the only such graphs, see \cite{sumner}. A general description of all equimatchable graphs in terms of their Gallai-Edmonds decomposition is provided in \cite{LPP}. Particular consequences of this description are that there is a polynomial-time algorithm recognizing equimatchable graphs, and that every $2$-connected equimatchable graph is either bipartite, or factor-critical. 
On the other hand, if the graph is $2$-connected, then the Gallai-Edmonds decomposition provides no additional anformation about the structure of the graph.
Since these early results, a significant attention was given to equimatchable graphs and related concepts of extendability, see \cite{plummer:1994}, \cite{plummer:2007}, and \cite{plummer:2008} for  surveys of the area. 
Despite considerable effort, the structure of equimatchable graphs is still not very well understood.
Particular exceptions are equimatchable factor-critical graphs with 
cuts of size $1$ or $2$,
which were characterized in \cite{favaron:1986}, and planar and cubic equimatchable graphs, which were characterized in \cite{KPS}. 
The aim of this paper is to describe the structure of equimatchable factor-critical graphs with respect to their minimum vertex cuts,  extending the results of Favaron \cite{favaron:1986} to graphs with higher connectivity.  
We build on a result that for any minimal matching $M$ isolating a vertex $v$ of a 2-connected equimatchable factor-critical graph $G$ the graph $G-(V(M)\cup\{v\})$ is connected, which was used in \cite{EK:2013} to bound the maximum size of equimatchable factor-critical graphs with a given genus. 

Matchings in graphs with independence number $2$ were  studied during attempts to solve a special case of Hadwiger's conjecture, see \cite{PST:2003} and \cite{CHS:2012} for details. In particular, it is known  that any odd graph with independence number $2$ is factor-critical, see for example \cite{PST:2003}. We reveal further connections between matchings and graphs with independence number $2$.

Our main results can be described as follows.
Let $G$ be a $k$-connected equimatchable factor-critical graph with a $k$-cut $S$, 
where $k\ge 3$.
If $G-S$ has at least $2k+3$ vertices,  then $G-S$ has exactly two components 
and both these components
are very close to being complete or complete bipartite. If both components of $G-S$ additionally have  at least $3$ vertices, 
then both are complete. Furthermore, if we also require 
 $k\ge 4$, then the graph has independence number $2$. 
On the other hand, we show that
every $2$-connected odd graph with independence number 2 is equimatchable and thus we get the following result. For any $k\ge 4$ let $G$ be a  $k$-connected odd graph with at least $2k+3$ vertices and  a $k$-cut $S$ such that $G-S$ has two components with  at least $3$ vertices. Then $G$ is equimatchable and factor-critical if and only if it has independence number $2$.
It turns out that independence number is related with equimatchable graphs also in the following way. 
A $2$-connected odd graph  $G$ with at least 4 vertices has independence number at most 2 if and only if $G$ is equimatchable and factor-critical and $G+e$ is equimatchable for every edge $e$ of the complement of $G$.

\section{Preliminaries}
All graphs in this paper are  finite, undirected, and  simple. 
All subgraphs are considered to be induced subgraphs unless immediately evident otherwise.
If $X$ is a set and $x$ an element of $X$, for brevity we denote  the set obtained by removing $x$ from $X$
by $X-x$. If $G$ is a graph and $v$ a vertex of $G$, with a slight abuse of notation we denote by $G-v$ the subgraph of $G$ induced by $V(G)-v$.
We say that  an edge is \emph{between} $A$ and $B$ if it has one endpoint in $A$ and the other endpoint in $B$, where 
 $A$ and $B$ are subgraphs, or sets of vertices, of a graph $G$. 
 Similarly, a set of edges or a matching are \emph{between} $A$ and $B$ if all their edges are between $A$ and $B$.
A graph or a component is \emph{even} if it has even number of vertices, otherwise it is \emph{odd}.
By a \emph{cut} we always mean a vertex cut.
A graph is \emph{randomly matchable} if it is equimatchable and has a perfect matching; it is known that a graph is connected and randomly matchable if and only if it is isomorphic with $K_{2n}$ or $K_{n,n}$ for some positive integer $n$, see \cite{sumner}.
For a matching $M$ of a graph $G$, by $V(M)$ we denote the vertices of $G$ covered by the edges of $M$.
We say that a matching $M$ \emph{isolates} a vertex $v$ of $G$ if $\{v\}$ is a component of $G-V(M)$. A matching $M$ is a \emph{minimal isolating matching of $v$} if $M$ isolates $v$ and
no proper subset of $M$ isolates $v$. We repeatedly use the following result.

\begin{theorem}[Eiben and Kotrb\v{c}\'ik \cite{EK:2013}]
\label{thm:paper1}
Let $G$ be a 2-connected  equimatchable factor-critical graph. Let $v$ be a vertex of $G$ and $M_v$ a minimal matching  isolating $v$.
Then $G-(V(M_v)\cup \{v\})$ is connected and randomly matchable.
\end{theorem}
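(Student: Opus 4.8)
The plan is to set $M=M_v$, $U=V(M)$, and $H=G-(U\cup\{v\})$, and to establish the two assertions---that $H$ is randomly matchable and that $H$ is connected---largely separately. First I would record the two basic consequences of the hypotheses. Since $M$ isolates $v$, we have $v\notin U$ and every neighbour of $v$ lies in $U$, so \emph{no} vertex of $H$ is adjacent to $v$. Since $M$ is minimal, deleting any edge $ab\in M$ must expose a neighbour of $v$, and as a matching covers each vertex exactly once this means every edge of $M$ has at least one endpoint in $N(v)$. I will use the first fact constantly and the second when constructing alternative matchings.

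For the randomly matchable part the argument is short. Let $N$ be any maximal matching of $H$. Because no vertex of $H$ is adjacent to $v$ and $U=V(M)$ is saturated by $M$, the set $M\cup N$ is a matching of $G$ whose only exposed vertices are $v$ together with the vertices of $H$ left uncovered by $N$; as $N$ is maximal in $H$ there is no edge joining two such vertices, and $v$ has no neighbour among them, so $M\cup N$ is a maximal matching of $G$. Equimatchability forces $M\cup N$ to be maximum, and factor-criticality makes every maximum matching miss exactly one vertex, necessarily $v$. Hence $N$ saturates $H$. Thus every maximal matching of $H$ is perfect, so $H$ is equimatchable with a perfect matching, i.e. randomly matchable; in particular each of its components is a $K_{2n}$ or a $K_{n,n}$ and has even order.

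The connectivity of $H$ is the substantial part, and I expect it to be the main obstacle. I would argue by contradiction, assuming $H$ has components $A_1,\dots,A_r$ with $r\ge 2$, each randomly matchable of even order. The strategy is to manufacture a maximal matching of $G$ that exposes at least three vertices; since $|V(G)|$ is odd a maximal matching is maximum precisely when it exposes exactly one vertex, so three exposed vertices contradict equimatchability. Because each component is internally randomly matchable, its vertices can only be stranded through their interaction with $U$: the idea is to match a single neighbour $u\in N(A_i)\cap U$ into $A_i$, which leaves $A_i-u$ of odd order and hence forces a leftover vertex $a_i^{*}$, and then to occupy all remaining $U$-neighbours of $a_i^{*}$ by a matching that avoids $A_i$, so that $a_i^{*}$ becomes unmatchable. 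Performing this in two distinct components while leaving $v$ exposed yields the three exposed vertices. The delicate point---and where $2$-connectivity is essential---is guaranteeing that the auxiliary partial matching of $U$ (saturating the relevant neighbourhoods away from the chosen components) actually exists; here I would invoke Menger's theorem to control the two vertex-disjoint connections between components and split into cases according to whether the components are complete or complete bipartite.

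An alternative route to connectivity, which I would keep in reserve if the stranding construction becomes unwieldy, is a symmetric-difference argument: take the maximum matching $Q=M\cup(\text{perfect matchings of the }A_i)$ missing only $v$, use factor-criticality to obtain a near-perfect matching $P$ missing a chosen vertex $x$ in some component $A_i$, and analyse the unique $v$--$x$ alternating path in $Q\triangle P$. Every edge of this path between $U$ and the components is a $P$-edge, whereas the $Q$-edges stay inside $U$ or inside a single component, so tracking how the path alternately enters and leaves the components should again expose a second component and produce either a non-maximum maximal matching or a violation of the minimality of $M$. Either way the core difficulty is the same: converting the mere existence of a second component into a concretely constructible bad matching, which is exactly the step that forces the full strength of $2$-connectivity.
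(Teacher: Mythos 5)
The paper does not actually prove Theorem~\ref{thm:paper1}; it is imported verbatim from \cite{EK:2013}, so there is no in-paper argument to compare against. Judged on its own terms, your proposal correctly and completely handles the \emph{randomly matchable} half: the observation that $M_v\cup N$ is a maximal matching of $G$ for any maximal matching $N$ of $H=G-(V(M_v)\cup\{v\})$, hence maximum, hence missing only $v$, is exactly right, and your two preliminary facts (all neighbours of $v$ lie in $V(M_v)$; minimality forces every edge of $M_v$ to meet $N(v)$) are correct.

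The \emph{connectedness} half, however, is not a proof but a plan, and the step you yourself flag as ``the delicate point'' is precisely the entire content of that half. You never exhibit the auxiliary matching that saturates the $U$-neighbourhoods of the two stranded vertices $a_1^{*}, a_2^{*}$ while keeping $v$ and both of them exposed, and you never show how Menger's theorem or the case split into $K_{2n}$ versus $K_{n,n}$ components actually produces it; the fallback symmetric-difference argument is likewise only gestured at. This gap is essential rather than routine: the statement is genuinely false for $1$-connected graphs (e.g.\ the equimatchable factor-critical graph obtained from three disjoint copies of $K_2$ by joining a new vertex $d$ to all six vertices --- taking $v$ in one $K_2$ forces $M_v=\{u'd\}$ where $u'$ is the partner of $v$, and the remaining graph is the disconnected $2K_2$), so any correct proof must invoke $2$-connectivity in a concrete combinatorial construction, which is exactly the part left open. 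As written, the proposal establishes only that $H$ is randomly matchable (equivalently, that each component of $H$ is $K_{2n}$ or $K_{n,n}$), not that $H$ is connected.
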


We assume that the reader is familiar with  basic properties of matchings;  for more details we refer to  \cite{LP}.

\section{Vertex cuts in equimatchable factor-critical graphs}

The aim of this section is to describe the structure of equimatchable factor-critical graphs with respect to their minimum vertex cuts. 
Favaron \cite{favaron:1986} provided a  characterisation of equimatchable factor-critical graphs with connectivity 1 or 2 with respect to their minimum vertex cuts.

\begin{theorem}[Favaron \cite{favaron:1986}]
A graph $G$ with vertex-connectivity $1$ is equimatchable and factor-critical if and only if all of the following conditions hold:\\
\textit{(1)} $G$ has  exactly one cut-vertex $d$;\\
\textit{(2)} every connected component $C_{i}$ of $G - d$ is randomly matchable; and\\
\textit{(3)} $d$ is adjacent to at least two adjacent vertices of each $C_{i}$.
\end{theorem}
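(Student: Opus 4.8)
The plan is to prove both implications directly, using only the facts recalled in the Preliminaries: that a connected randomly matchable graph is isomorphic to $K_{2n}$ or $K_{n,n}$, and that a factor-critical graph has odd order with $G-v$ perfectly matchable for every vertex $v$. Throughout I would fix a cut-vertex $d$ and let $C_1,\dots,C_m$ (with $m\ge 2$) be the components of $G-d$. First I would record the easy parity observations: if $G$ is factor-critical then $G-d$ has a perfect matching, so each $C_i$ has even order and a perfect matching of its own; and $d$ has at least one neighbour in every $C_i$ since $G$ is connected.

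For the sufficiency direction I would assume (1)--(3) and verify factor-criticality and equimatchability separately. To see that $G-v$ has a perfect matching, I take perfect matchings on all components not containing $v$ and handle the component $C_i\ni v$ together with $d$; here condition (3) is exactly what is needed, since in $C_i\cong K_{n,n}$ I must match $d$ to a neighbour on the side opposite to $v$ to keep the two parts balanced, and the two adjacent neighbours of $d$ guaranteed by (3) lie one in each part. For equimatchability I would take an arbitrary maximal matching $M$ of $G$ and split into cases according to whether $d$ is unmatched or matched into some $C_\ell$. A short parity-and-independence argument then shows that maximality forces $M$ restricted to every component untouched by $d$ to be a maximal, hence (by random matchability) perfect, matching, while the component receiving $d$ loses exactly one further vertex; in every case $M$ misses exactly one vertex and is therefore maximum.

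For the necessity direction the interesting step is (2), that every $C_i$ is randomly matchable. Since $C_i$ is connected and already has a perfect matching, it suffices to prove $C_i$ is equimatchable, which I would do by contradiction. If some $C_i$ had a maximal matching $N$ missing an independent set $U$ with $|U|\ge 2$, I would build a maximal matching of the whole graph that still misses all of $U$: choose another component $C_k$ (possible since $m\ge 2$) together with a neighbour $w$ of $d$ in $C_k$, use the edge $dw$, extend to a maximal matching of $C_k-w$, take perfect matchings on the remaining components, and keep $N$ on $C_i$. The crucial point is that routing $d$ out of $C_i$ keeps $U$ exposed while $d$ itself is used, so the resulting matching of $G$ is maximal yet misses at least the two vertices of $U$, contradicting that $G$ is equimatchable and factor-critical. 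Once (2) holds, each $C_i\cong K_{2n}$ or $K_{n,n}$.

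Conditions (3) and (1) would then follow from factor-criticality. For (3) I would show that the neighbourhood of $d$ in a $K_{2n}$-component cannot be a single vertex and that its neighbourhood in a $K_{n,n}$-component cannot be confined to one part: in each case, deleting a suitable neighbour $a$ of $d$ makes it impossible to complete a perfect matching of $G-a$, since $C_i$ is then left unbalanced whether $d$ is matched inside or outside it. Hence $d$ has two adjacent neighbours in every $C_i$. For (1) I would note that deleting any vertex $d'\neq d$ leaves each $C_i$ connected (complete and balanced complete bipartite graphs have no cut-vertex) and still joined to the rest through $d$, which retains a neighbour in $C_i-d'$ because it has at least two neighbours there; thus $d'$ is not a cut-vertex and $d$ is unique. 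I expect the main obstacle to be the necessity of (2): the naive attempt to expose $U$ by matching $d$ inside $C_i$ fails precisely when $|U|=2$ and $d$ is adjacent to $U$, and the decisive idea is instead to absorb $d$ into a different component so that both exposed vertices survive.
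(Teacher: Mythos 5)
The paper does not prove this statement; it is quoted verbatim from Favaron's 1986 paper as background, so there is no internal proof to compare against. Judged on its own, your reconstruction is correct and complete in all essential points. The sufficiency argument is sound: for factor-criticality the two adjacent neighbours guaranteed by (3) are exactly what is needed to repair the parity of a $K_{n,n}$-component after deleting $v$, and for equimatchability your case split works because a maximal matching of $G$ restricts to a maximal matching of each component untouched by $d$ (hence perfect, by random matchability) and to a maximal matching of $C_\ell-w$ in the component receiving $d$ (hence near-perfect, since $K_{2n-1}$ and $K_{n-1,n}$ are equimatchable --- this last fact is used implicitly and is worth stating). For necessity, your key idea for (2) --- absorbing $d$ into a \emph{different} component so that both exposed vertices of the putative non-maximum maximal matching of $C_i$ survive, yielding a maximal matching of $G$ missing at least two vertices --- is precisely the right move, and your derivations of (3) from the parity obstruction in $G-a$ and of (1) from the fact that $K_{2n}$ and $K_{n,n}$ have no cut-vertices while $d$ keeps a neighbour in $C_i-d'$ are both valid. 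I see no gap.
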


While the  case of  graphs with connectivity 1 is somewhat exceptional, our results for connectivity $k\ge 3$ are in nature very similar to Theorem~\ref{thm:favaron-2conn} below. In particular, the difficulties with describing the whole larger component in the case when the smaller component is a singleton carry completely to large connectivity, as can be seen from Theorem~\ref{thm:k-and-one}.

\begin{theorem}[Favaron  \cite{favaron:1986}]
\label{thm:favaron-2conn}
    Let $G$ be a $2$-connected equimatchable factor-critical graph with at least $4$ vertices and
    a $2$-cut $S=\{s_{1}, s_{2}\}$. Then $G- S$ has precisely two components, one of them even
    and the other odd. 
    Let $A$ and $B$ denote the even, respectively the odd component of $G- S$, 
    let $a_{1}$ and $a_{2}$ be two distinct vertices of $A$ adjacent to $s_{1}$ and  $s_{2}$, respectively, 
    and, if $|B|> 1$, let $b_{1}$ and $b_{2}$ be two distinct vertices of $B$ adjacent to $s_{1}$ and $s_{2}$, respectively.
    Then $G$ has the following structure:\\
\textit{(1)} $B$ is one of the four graphs $K_{2p+1}$, $K_{2p+1}- \{b_{1}b_{2}\}$, $K_{p,p+1}$, $K_{p,p+1}\cup \{b_{1}b_{2}\}$ for some nonegative integer $p$.
In the two last cases all neighbours of $S$ in $B$ belong to the larger partite set of $K_{p,p+1}$.\\
\textit{(2)} $A- \{ a_{1} ,a_{2}\}$ is connected randomly matchable
and, if $|B| > 1$, then $A$ is connected randomly matchable.
\end{theorem}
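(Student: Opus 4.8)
\section*{Proof proposal}

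The plan is to first determine the number and the parity of the components of $G-S$, and then to recover the structure of each component by repeatedly exposing it as an induced subgraph of the form $G-(V(M_v)\cup\{v\})$ and invoking Theorem~\ref{thm:paper1}. For the component count I would argue as follows. Since $G$ is factor-critical, $|V(G)|$ is odd and $G-s_2$ has a perfect matching $M$; in $M$ every vertex of an odd component $C$ of $G-S$ is matched either inside $C$ or to $s_1$, the only vertex of $G-s_2$ lying outside $G-S$. As $|C|$ is odd, at least one vertex of $C$ is matched to $s_1$, and $s_1$ is matched only once, so $G-S$ has exactly one odd component, which I call $B$. Because $S$ is a cut there is a second component, necessarily even. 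To rule out a third component, suppose $G-S$ had two even components $A_1$ and $A_2$ besides $B$. By $2$-connectivity each component is adjacent to both $s_1$ and $s_2$, so I can match $s_1$ to a vertex of $A_1$ and $s_2$ to a vertex of $A_2$ and then extend greedily inside each component. The remainders of $A_1$, of $A_2$, and of $B$ all have odd order, so the resulting maximal matching misses at least one vertex in each of them; these three uncovered vertices lie in distinct components and hence are pairwise non-adjacent, so the matching is maximal in $G$ yet misses at least three vertices, contradicting equimatchability. Thus $G-S$ has exactly two components, the even one $A$ and the odd one $B$.

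The engine for the structural part is the following use of Theorem~\ref{thm:paper1}: if $M_v$ is a minimal matching isolating $v$, then $H:=G-(V(M_v)\cup\{v\})$ is connected and randomly matchable, hence $H\cong K_{2m}$ or $H\cong K_{m,m}$. Moreover, a short equimatchability argument shows that for \emph{any} isolating matching $M_v$ of $v$ the subgraph $G-(V(M_v)\cup\{v\})$ already has a perfect matching and is equimatchable, so the only extra information minimality provides is connectivity. I would then choose $v$ and $M_v$ so that $V(M_v)\cup\{v\}$ equals a prescribed vertex set, thereby exposing the piece I want to analyse. Concretely, isolating a vertex $a\in A$ by a matching that covers $A-a$, saturates $S$, and reaches exactly one vertex $b$ of $B$ exposes $B-b$; isolating a vertex $v\in B$ by a matching that covers $B-v$ and sends $s_1,s_2$ to two neighbours $a_1,a_2\in A$ exposes $A-\{a_1,a_2\}$, and routing $s_1,s_2$ into $B$ instead exposes the whole of $A$ when $|B|>1$.

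From $B-b\cong K_{2p}$ or $K_{p,p}$ I would recover the four possibilities for $B$ by analysing how the deleted vertex attaches. In the complete case $b$ can miss at most one vertex, since otherwise deleting a missed vertex would leave a clique of size $2p-1$ inside a graph forced to be $K_{p,p}$, which is impossible for $p\ge 2$; this yields $K_{2p+1}$ or $K_{2p+1}-e$. In the bipartite case the analogous analysis gives $K_{p,p+1}$ or $K_{p,p+1}$ with a single extra edge, and tracking which vertices are adjacent to $S$ forces every neighbour of $S$ in $B$ into the larger part. The statement about $A$ then follows from exposing $A-\{a_1,a_2\}$ and (for $|B|>1$) $A$ as connected randomly matchable subgraphs, once $B$ is understood well enough to guarantee the matchings of $B$ used in the routing across $S$.

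The main obstacle I anticipate is the bootstrapping and minimality bookkeeping. Exposing $B$ cleanly needs a perfect matching inside $A-\{a,a'\}$ and exposing $A$ needs one inside $B-\{v,b_1,b_2\}$, so the two structural claims are mutually entangled and must be untangled by determining $B$ first and then feeding its structure into the argument for $A$. In addition, Theorem~\ref{thm:paper1} applies only to \emph{minimal} isolating matchings, whereas the matchings I prescribe are built to expose a specific set; I must therefore either verify that the prescribed matchings are minimal or replace them by genuine minimal ones without losing control of the exposed subgraph. Managing this interaction between minimality, the routing across $S$, and the small or degenerate cases ($|B|=1$, $p\le 1$, and the precise adjacencies at $S$) is where the real work of the proof lies.
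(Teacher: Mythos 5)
The paper does not actually prove this statement---it is quoted from Favaron's 1986 paper---so the fair comparison is with the machinery the authors develop to prove its $k\ge 3$ analogues (Theorems~\ref{thm:kCutEFC} and~\ref{thm:Cgek-Dge3}). Your first paragraph is correct and complete: the parity argument in $G-s_2$ showing there is at most one odd component, combined with $|V(G)-S|$ being odd and the maximal-matching-missing-three-vertices contradiction that rules out a third component, does establish that $G-S$ has exactly two components, one even and one odd. Your observation that for \emph{any} isolating matching the exposed subgraph is randomly matchable (with minimality needed only for connectivity) is also sound.

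The structural half, however, is a plan rather than a proof, and the two obstacles you name at the end are genuine gaps that you identify but do not close. To expose $B-b$ you prescribe a matching that perfectly matches $A-\{a,a'\}$; the existence of such a matching is essentially conclusion (2), which you propose to prove only afterwards, so the bootstrapping is circular as stated. Likewise, Theorem~\ref{thm:paper1} requires a \emph{minimal} isolating matching, and replacing your prescribed matching by a minimal sub-matching can change which vertex set is exposed, so connectivity of the intended piece is not secured. Both problems are resolved by the paper's Lemma~\ref{lem:matchingCut}: for \emph{any} matching $M$ and any odd component $H$ of $G-V(M)$, the graph $G-(H\cup V(M))$ is connected randomly matchable. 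With it you can start from the two-edge matching $\{s_1a_1,s_2a_2\}$, whose existence needs no knowledge of the interiors of $A$ or $B$, take $H=B$, and conclude at once that $A-\{a_1,a_2\}$ is connected randomly matchable; symmetrically $\{s_1b_1,s_2b_2\}$ with $H$ an odd component of $B-\{b_1,b_2\}$ yields that $A$ is connected randomly matchable when $|B|>1$, and only then do you have the perfect matchings of $A$ needed to expose $B$ minus prescribed vertices and run your case analysis. Finally, even granting the exposure of $B-b$, your sketch does not reach the precise form of conclusion (1): you still have to show that the unique missing or extra edge is specifically $b_1b_2$ and that all neighbours of $S$ in $B$ lie in the larger partite set of $K_{p,p+1}$, and these require further explicit matching constructions that the proposal does not indicate.
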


We extend these results to arbitrary fixed connectivity $k\ge 3$ by showing that if the graph has at least $2k+3$ vertices,
then there are exactly two components and both these components are almost complete or complete bipartite.
Our point of departure is  a lemma which allows us to efficiently apply Theorem~\ref{thm:paper1} to bound the number of components. 

\begin{lem}\label{lem:matchingCut}
    Let $G$ be a $2$-connected equimatchable factor-critical graph, $M$ a matching of $G$, and 
    $H$ an odd component of $G- V(M)$. Then $G- (H \cup V(M))$ 
    is connected randomly matchable.
\end{lem}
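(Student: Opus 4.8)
The plan is to reduce the statement to Theorem~\ref{thm:paper1} by replacing the odd component $H$ with a single isolated vertex. First I would show that $H$ admits a near-perfect matching, i.e.\ that there is a vertex $v\in H$ for which $H-v$ has a perfect matching $M'$. To see this, take a maximal matching of each component of $G-V(M)$ and let $M_0$ be their union together with $M$. Every vertex left uncovered by $M_0$ lies in some component $C$ of $G-V(M)$ and, by maximality of the chosen matching of $C$, all of its neighbours inside $C$ are covered; since $C$ is a component of $G-V(M)$ its only other neighbours lie in $V(M)$, which is covered by $M$. Hence $M_0$ is a maximal matching of $G$, so by equimatchability it is maximum, and since $G$ is factor-critical (and therefore odd) it misses exactly one vertex. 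As $H$ is odd, the matching chosen inside $H$ misses an odd, hence nonzero, number of its vertices, so this unique uncovered vertex $v$ must lie in $H$, and the matching chosen inside $H$ is a perfect matching $M'$ of $H-v$.

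Next I would form the matching $N:=M\cup M'$. Because $H$ is a component of $G-V(M)$, every neighbour of $v$ lies in $V(M)\cup H$, and all such vertices except $v$ itself are covered by $N$; thus $N$ isolates $v$ and $V(N)=V(M)\cup(H-v)$. Choose a minimal submatching $M_v\subseteq N$ isolating $v$ and apply Theorem~\ref{thm:paper1}: the graph $Q:=G-(V(M_v)\cup\{v\})$ is connected and randomly matchable, hence isomorphic to $K_{2n}$ or to $K_{n,n}$. It remains to pass from $Q$ back to the graph $K:=G-(H\cup V(M))$ that we actually care about.

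The key observation is that $K$ is obtained from $Q$ by deleting the set $W:=V(N)\setminus V(M_v)$ of vertices covered by $N$ but not by $M_v$. Indeed $v\notin V(N)$ and $W\cap V(M_v)=\emptyset$, so $W\subseteq V(Q)$ and $V(M_v)\cup\{v\}=(V(M)\cup H)\setminus W$, giving $K=Q-W$. Moreover the edges of $N\setminus M_v$ match $W$ perfectly and lie inside $Q$. Deleting such an internally perfectly matched set from $K_{2n}$ leaves $K_{2n-|W|}$, and deleting it from $K_{n,n}$ removes equally many vertices from each side (every matching edge crosses the bipartition), leaving a balanced $K_{n-w,n-w}$; in both cases the result is again complete or balanced complete bipartite, hence connected and randomly matchable. (The only degenerate possibility, $K=\varnothing$, is trivial.) This yields the lemma.

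I expect the main obstacle to be precisely this last reconciliation between $M$ and the minimal matching $M_v$ furnished by Theorem~\ref{thm:paper1}: the theorem only controls $G-(V(M_v)\cup\{v\})$, whereas the lemma asks about $G-(H\cup V(M))$, and one must argue that the extra matched vertices $W$ sit inside the complete or complete bipartite graph $Q$ as a set whose deletion preserves randomly-matchability. Verifying that $H$ has a near-perfect matching is the other point where equimatchability and factor-criticality are genuinely used.
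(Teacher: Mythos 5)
Your proposal is correct and follows essentially the same route as the paper: the paper extends $M$ to a maximum matching via equimatchability (rather than assembling one from maximal matchings of the components), observes that the missed vertex $v$ lies in $H$ and is isolated, passes to a minimal isolating submatching $M_v$, applies Theorem~\ref{thm:paper1}, and then deletes the leftover perfectly matched vertices from the resulting $K_{2n}$ or $K_{n,n}$. If anything, your bookkeeping of the deleted set $W$ is slightly more careful than the paper's, which only mentions removing $V(M-M_v)$ and leaves the leftover vertices of $H$ implicit.
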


\begin{proof}
    Since $G$ is equimatchable, the matching $M$ can be extended to a maximum matching $M'$ of $G$. The fact that $G$ is factor-critical implies that $M'$ leaves uncovered exactly one vertex $v$ of $G$.
Clearly, $M'$ cannot cover all vertices of $H$ and hence $v$ lies in $H$.
The matching $M'$ covers all neighbours of $v$ and thus it is an isolating matching of $v$. 
Consider any minimal matching $M_v$ such that $M_{v}\subseteq M'$ and $M_v$ isolates $v$. Let $G'$ denote the graph $G- (V(M_v)\cup \{v\})$.
By Theorem \ref{thm:paper1} the graph $G'$ is connected randomly matchable.
It is not difficult to see that $M_v$ can contain only edges of $M$ and edges of $H$, and thus 
$\{v\}\cup V(M_v)\subseteq H\cup V(M)$. 
It follows that
$G- (H \cup V(M)) \subseteq G- (\{v\} \cup V(M_v)) = G'$ and that
 the graph $G- (H \cup V(M))$ can be obtained from $G'$ by removing the vertices covered by the edges of $M- M_v$. It is easy to see that removing any two adjacent vertices
of $K_{2n}$ or $K_{n,n}$ 
leads to $K_{2n-2}$ or $K_{n-1,n-1}$.
We conclude 
that $G- (H \cup V(M))$ is connected randomly matchable,
as claimed.
\end{proof}

The next lemma guarantees the existence of a large number of independent edges between 
any subset of a cut and a component separated by the cut.

\begin{lem}
\label{lem:independentEdges}
    Let $G$ be a 
	$k$-connected graph
with a $k$-cut $S$, where $k\ge 0$.
    Let $H$ be a component of $G- S$. Then for arbitrary set of vertices $X\subseteq S$ the graph $G$
contains 
at least $\min(|H|, |X|)$ independent edges between $H$ and $X$.
\end{lem}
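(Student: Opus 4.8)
The plan is to view the edges of $G$ between $H$ and $X$ as a bipartite graph $B$ with parts $H$ and $X$, and to prove that $B$ has a matching of size $\min(|H|,|X|)$; such a matching is precisely a set of that many independent edges between $H$ and $X$. If $X=\emptyset$ the claim is trivial, so assume $X\ne\emptyset$. I would argue by contradiction using K\"onig's theorem: suppose the maximum matching of $B$ has size $m<\min(|H|,|X|)$, and let $W$ be a vertex cover of $B$ with $|W|=m$, written as $W=W_H\cup W_X$ with $W_H\subseteq H$ and $W_X\subseteq X$.

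The key step is to convert this small cover into a small vertex cut of $G$, contradicting $k$-connectivity. Set $Z=W_H\cup W_X\cup(S\setminus X)$, so that $|Z|=m+(k-|X|)$. Since $m<\min(|H|,|X|)\le|X|$, we get $|Z|=m+k-|X|<k$. I claim $Z$ separates $H\setminus W_H$ from the rest of $G$. Indeed, because $H$ is a component of $G-S$, every neighbour of a vertex of $H$ lies in $H\cup S$, and after deleting $Z$ the only vertices of $S$ that survive are those of $X\setminus W_X$. But $W$ is a vertex cover of $B$, so $G$ contains no edge between $H\setminus W_H$ and $X\setminus W_X$. Hence no vertex of $H\setminus W_H$ has a neighbour in $G-Z$ outside $H$, so $H\setminus W_H$ is a union of components of $G-Z$.

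It remains to verify that $Z$ is genuinely a cut, which requires both sides to be nonempty. Since $|W_H|\le m<|H|$, the set $H\setminus W_H$ is nonempty, and since $|W_X|\le m<|X|$, the set $X\setminus W_X$ is nonempty and lies in $G-Z$ outside $H\setminus W_H$. Thus $G-Z$ is disconnected while $|Z|<k$, contradicting the $k$-connectivity of $G$. Therefore $B$ has a matching of size $\min(|H|,|X|)$, which is the desired set of independent edges.

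I expect the only delicate points to be the bookkeeping that $|Z|=m+k-|X|$ combined with the strict inequality $m<|X|$, and the check that both sides of the alleged cut are nonempty; the core of the argument — that a small cover of the bipartite graph yields a small cut of $G$ — is otherwise routine once the neighbourhoods of $H$ have been localized to $H\cup S$.
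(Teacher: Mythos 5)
Your proof is correct and follows essentially the same route as the paper's: apply K\"onig's theorem to the bipartite graph of edges between $H$ and $X$, and convert a vertex cover of size less than $\min(|H|,|X|)$ into a vertex cut $(S\setminus X)\cup W$ of $G$ of size less than $k$, contradicting $k$-connectivity. The only difference is that you spell out more carefully than the paper that both sides of the alleged cut are nonempty, which is a welcome but not substantively different addition.
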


\begin{proof}
    We prove the lemma by contradiction. 
    Let $l$ be the maximum number of independent edges of $G$ between $H$ and $X$ and suppose that $l < \min(|H|, |X|)$. 
    Since any set of independent edges between $H$ and $X$ is a matching between the vertices of $H$ and $X$, any maximum matching between $H$ and $X$ has size $l$.
    By K\"onig's theorem \cite{konig} the maximum size of a matching between $H$ and $X$ equals the minimum 
    cardinality of a vertex cover of all edges between $H$ and $X$. Hence there is a vertex set $Y\subseteq (H\cup X)$ such that
    $|Y|=l$ and $Y$ cover all edges between $H$ and $X$. 
    Since $|Y|<|H|$, the set $H- Y$ contains at least one vertex and $(S- X)\cup Y$ is a vertex cut of $G$. 
    Using $|Y|<|X|$ we get that  the size of $(S- X)\cup Y$ satisfies $(|S| - |X|) + |Y|= k-|X|+|Y|<k$, which contradicts  the fact that 
    $G$ is $k$-connected. 
\end{proof}

We are now ready to prove that
in the case where
there is a component with at least $k$ vertices
and a component with precisely one vertex
there are exactly two components and
 the larger component, except the vertices matched with the cut, is complete or complete bipartite.
 However, as stated earlier, a description of the structure of the graph induced on $V(M)$ and of the edges between $V(M)$ and $C$ seems to be quite difficult and remains to be an open problem.

\begin{theorem}
\label{thm:k-and-one}
Let $G$ be a $k$-connected equimatchable factor-critical graph with a $k$-cut $S$ such that $G- S$ has a component with  a single vertex and a component with at least $k$ vertices, where $k\ge 2$.
Then $G-S$ has exactly two components and there is a matching $M$ between $S$ and $C$ covering all vertices of $S$. Furthermore, $C-V(M)$ is connected randomly matchable.
\end{theorem}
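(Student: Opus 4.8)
The plan is to exploit the two lemmas just proved. Write $\{u\}$ for the single-vertex component of $G-S$ and $C$ for the component with at least $k$ vertices; since $\{u\}$ is a component of $G-S$, all neighbours of $u$ lie in $S$, and $k$-connectivity forces $u$ to be adjacent to every vertex of $S$. First I would produce the promised matching: applying Lemma~\ref{lem:independentEdges} with $X=S$ and $H=C$ yields $\min(|C|,|S|)=k$ independent edges between $C$ and $S$, which is a matching $M$ between $S$ and $C$ saturating all of $S$. This already delivers the matching $M$ claimed in the statement. Because every neighbour of $u$ lies in $S\subseteq V(M)$, the vertex $u$ is isolated in $G-V(M)$, so $\{u\}$ is an odd component of $G-V(M)$; Lemma~\ref{lem:matchingCut} then shows that $W:=G-(\{u\}\cup V(M))$ is connected and randomly matchable.

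Next I would read off the number of components from the connectivity of $W$. Its vertex set is $(C-V(M))$ together with every component of $G-S$ other than $\{u\}$ and $C$, and distinct components of $G-S$ have no edges between them; hence $W$ can be connected only if exactly one of these pieces is nonempty. If $|C|>k$ then $C-V(M)\neq\emptyset$, so no further component can exist and $G-S$ has exactly the two components $\{u\}$ and $C$. Moreover, once two components are established, $W$ is precisely $C-V(M)$, so the final assertion that $C-V(M)$ is connected and randomly matchable is immediate from Lemma~\ref{lem:matchingCut}.

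The delicate point, and the step I expect to be the main obstacle, is the boundary case $|C|=k$: here $M$ saturates all of $C$, so $C-V(M)=\emptyset$ and the argument above gives no information. Assuming towards a contradiction that a third component exists, connectivity of $W$ forces $W$ to be a single further component $D$, which is randomly matchable and hence has $|D|\ge 2$. To finish I would construct a second matching straddling both $C$ and $D$: pick an edge $s_1d_1$ with $s_1\in S$ and $d_1\in D$ (such an edge exists by $k$-connectivity), and use Lemma~\ref{lem:independentEdges} with $H=C$ and $X=S-s_1$ to match the remaining $k-1$ vertices of $S$ into $C$. The resulting matching $M^{*}$ again saturates $S$, but now it leaves exactly one vertex of $C$ and at least one vertex of $D$ uncovered. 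Applying Lemma~\ref{lem:matchingCut} once more, $G-(\{u\}\cup V(M^{*}))$ would be connected while containing nonempty parts of the two distinct components $C$ and $D$, between which there are no edges -- a contradiction. This rules out a third component when $|C|=k$ as well, establishing that $G-S$ has exactly two components and, with it, the remaining claims.
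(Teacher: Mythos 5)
Your proof is correct and follows essentially the same route as the paper: build a matching $M$ saturating $S$ into $C$ via Lemma~\ref{lem:independentEdges}, observe that the singleton vertex is adjacent to all of $S$ so that it becomes an isolated odd component of $G-V(M)$, and invoke the resulting connectivity of the leftover graph (the paper cites Theorem~\ref{thm:paper1} directly, you route through Lemma~\ref{lem:matchingCut}, which amounts to the same thing) to force $G-S$ to have only two components. Your explicit treatment of the boundary case $|C|=k$, where $C-V(M)=\emptyset$ and the connectivity of $G-(\{u\}\cup V(M))$ alone does not exclude a third component, is a correct and welcome addition that the paper's one-line conclusion passes over in silence.
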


\begin{proof}
Existence of a matching $M$ between $S$ and $C$ covering all vertices of $S$ is a consequence of Lemma~\ref{lem:independentEdges}. 
Let $v$ be the vertex of the single-vertex component of $G-S$.
Lemma~\ref{lem:independentEdges}
  implies that $v$ is adjacent to every vertex of $S$ and thus $M$ is a minimal isolating matching of $v$. 
By Theorem~\ref{thm:paper1} the graph $G-(V(M)\cup \{v\})$ is connected and randomly matchable,
which completes the proof.
\end{proof}

The next lemma implies that if the graph has at least $2k+3$ vertices, then removing any minimum cut yields precisely two components. 

\begin{lemma}
\label{lemma:two-components}
Let $G$ be a $k$-connected equimatchable  factor-critical graph with 
a $k$-cut $S$, where $k\ge 2$.
If $G$ has at least $2k+3$ vertices, then $G-S$ has precisely two components.
\end{lemma}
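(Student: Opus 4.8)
The plan is to assume for contradiction that $G-S$ has at least three components and derive a violation of the vertex count $|V(G)|\ge 2k+3$. The key structural tool will be Lemma~\ref{lem:matchingCut}: if I can exhibit a matching $M$ whose removal leaves an odd component $H$, then $G-(H\cup V(M))$ is connected randomly matchable, which forces all the remaining components of $G-S$ (other than $H$) to collapse together. So the core idea is that factor-criticality plus equimatchability severely limits how the pieces of $G-S$ can coexist: essentially at most one of them can be "isolated off" while the rest must be glued into a single randomly matchable graph.

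First I would establish parity constraints. Since $G$ is factor-critical it has an odd number of vertices, and since each component of $G-S$ together with $S$ must interact with a near-perfect matching, I expect a counting argument on parities of the components. Concretely, suppose the components are $C_1,\dots,C_m$ with $m\ge 3$. I would pick a matching $M$ saturating $S$ into one component and covering enough of the others so that some odd component $H$ becomes isolated in $G-V(M)$; Lemma~\ref{lem:independentEdges} guarantees the necessary independent edges between $S$ and each component exist (at least $\min(|C_i|,k)$ of them). Applying Lemma~\ref{lem:matchingCut} then tells me $G-(H\cup V(M))$ is a single connected randomly matchable graph, contradicting the presence of $\ge 2$ leftover components unless those components got entirely absorbed into $V(M)$ — which, for size reasons, cannot happen once the total vertex count is large.

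The main obstacle will be handling the case analysis on which component is chosen as the "large" or "odd" one and ensuring the matching $M$ can be built to isolate a single odd component while leaving the others intact (not swallowed into $V(M)$). The crucial quantitative step is the bound $2k+3$: I would argue that if three or more components survive, then each component contributes at least one vertex and the matching machinery forces $|V(M)|$ plus the sizes of the surviving components to be small, but the randomly-matchable conclusion means everything outside $H$ is one complete or complete-bipartite block of controlled size. Pushing these size bounds against $|V(G)|\ge 2k+3$ is where the real work lies, and I expect the delicate point to be verifying that the threshold $2k+3$ is exactly what is needed to rule out a third small component — smaller graphs genuinely can have three components (as in Favaron's $k=2$ boundary cases), so the argument must use the surplus vertices beyond $2k+2$ to force the contradiction.

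In summary, my plan is: (i) use Lemma~\ref{lem:independentEdges} to produce the saturating independent edges from $S$ into the components; (ii) assemble a matching $M$ isolating a chosen odd component $H$; (iii) invoke Lemma~\ref{lem:matchingCut} to conclude $G-(H\cup V(M))$ is connected randomly matchable, hence cannot split into the remaining $\ge 2$ components unless they are absorbed into $V(M)$; and (iv) run the vertex count, using $|V(G)|\ge 2k+3$ to rule out absorption and thereby force $m=2$.
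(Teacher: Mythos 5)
Your plan has a genuine gap at exactly the point you defer as ``where the real work lies.'' To get a contradiction from Lemma~\ref{lem:matchingCut} you need a matching $M$ saturating $S$ and an odd component $H$ of $G-V(M)$ such that $G-(H\cup V(M))$ still meets at least \emph{two} of the components of $G-S$ in nonempty sets; only then is it disconnected. But $M$ removes $k$ vertices from $G-S$, and nothing in your sketch prevents $M$ from fully absorbing all but two of the components, or from leaving nonempty remnants in only two components where the odd remnant is itself connected --- in which case $G-(H\cup V(M))$ is a single connected piece and Lemma~\ref{lem:matchingCut} yields no contradiction. (Concretely: with $l=3$, $|C_1|=k$, $|C_2|=1$, $|C_3|=2$, matching all of $S$ into $C_1$ leaves remnants of sizes $0,1,2$; taking $H=C_2$ the rest is just $C_3\cong K_2$, perfectly consistent with the conclusion of Lemma~\ref{lem:matchingCut}.) One can repair this by carefully distributing the edges of $M$ among the components so that at least three remnants survive, but that distribution argument --- including the cases where every component has fewer than $k$ vertices, where $M$ cannot saturate $S$ into a single component at all --- is the entire content of the lemma and is absent from your proposal. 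Your description of how the hypothesis $|V(G)|\ge 2k+3$ enters (``the surplus vertices rule out absorption'') is also only a hope, not an argument.

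For contrast, the paper's proof does not use Lemma~\ref{lem:matchingCut} at all. It builds a matching $N$ between $S$ and $G-S$ that covers all of $S$ while leaving an \emph{odd} number of uncovered vertices in each of $H_1$ and $H_2$ simultaneously (using Lemma~\ref{lem:independentEdges} together with a maximality argument to control how many vertices of $S$ remain unmatched, and using $|V(G)|\ge 2k+3$ only to guarantee that $H_3\cup\dots\cup H_l$ is large enough to absorb the leftover of $S$). Since $S\subseteq V(N)$ separates $H_1$ and $H_2$ from everything else, any maximal matching containing $N$ misses at least one vertex in each of $H_1$ and $H_2$, hence at least two vertices, contradicting equimatchability plus factor-criticality directly. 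This parity-deficiency argument sidesteps all of the absorption case analysis your route would require; if you want to salvage your approach, you should at minimum supply the construction of $M$ guaranteeing three surviving remnants, but the paper's route is both shorter and more robust.
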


\begin{proof}
For a contradiction suppose that $H_1,\ldots, H_l$ are the components of $G- S$ for some $l\ge 3$.
Let $M$ be a matching between $S$ and $H_1\cup H_2$ covering as many vertices of $S$ as possible while 
leaving uncovered odd number of vertices of both $H_1$ and $H_2$.
Observe that such a matching always exists since $k\ge 2$ and, by Lemma \ref{lem:independentEdges}, every vertex of $S$ is adjacent to every component of $G-S$.
First we prove that if $M$ leaves uncovered at least $2$ vertices of $S$, then it leaves uncovered precisely one vertex in both $H_1$ and $H_2$. Indeed, suppose for the contrary that $M$ leaves uncovered at  
least two vertices $s_1$ and $s_2$ of $S$ and more than one vertex in, say, $H_1$. Note that in this case $M$ leaves uncovered at least 3 vertices of $H_1$.
Denote by  $M_1$ the edges of $M$ incident with $H_1$. Let $X = \{s_1,s_2\}\cup(S\cap V(M_1))$. Applying Lemma \ref{lem:independentEdges} to $H_1$ and $X$ yields that there is a matching $M'$ between $H_1$ and $X$ covering all vertices of $X$. 
It can be easily seen that $M'' = M'\cup (M- M_1)$  is a matching between $S$ and $H_1\cup H_2$ which leaves uncovered odd number of vertices in both $H_1$ and $H_2$, and that $M''$ is larger than $M$, which contradicts the maximality of $M$.

We proceed to extend $M$ to a matching $N$ between $S$ and $G-S$ such that $N$ covers all vertices of $S$ and leaving uncovered odd number of vertices in both $H_1$ and $H_2$. If $M$ covers all vertices of $S$, then let $N=M$.
If $M$ leaves uncovered precisely one vertex $s$ of $S$, then let $N=M \cup\{e\}$, where $e$ is any edge joining $s$ with $H_3$, note that such an edge always exists by Lemma~ \ref{lem:independentEdges}. Finally, if $M$ leaves uncovered at least $2$ vertices of $S$, then it leaves uncovered exactly one vertex in both $H_1$ and $H_2$ as shown above, and $|V(G)|\ge 2k+3$ implies that $H_3\cup\dots\cup H_l$ contains more vertices than $S-V(M)$. Therefore, by Lemma~\ref{lem:independentEdges} there is a matching $N'$ between $S-V(M)$ and $H_3\cup \dots\cup H_l$ covering all vertices of $S-V(M)$. Now $N=M\cup N'$ is the desired matching covering all vertices of $S$ and leaving uncovered odd number of vertices in both $H_1$ and $H_2$.

To complete the proof it suffices to show that $N$ cannot be extended to a maximum matching of $G$, contradicting the fact that $G$ is equimatchable. Indeed, $N$ leaves uncovered odd number of vertices in both $H_1$ and $H_2$ and separates $H_1$ and $H_2$ from the rest of the graph and thus any maximal matching $N''\supseteq N$ leaves uncovered at least one vertex in both $H_1$ and $H_2$. Since $G$ is equimatchable and factor-critical, any maximum matching of $G$ leaves uncovered precisely one vertex of $G$ and hence $N''$ cannot be a maximum matching. The proof is now complete.
\end{proof}

To deal with the cases where the smaller component of $G-S$ has at least two vertices we will 
need the following lemma.

\begin{lemma}
\label{lemma:k-and-two}
Let $G$ be a $k$-connected equimatchable factor-critical graph with a $k$-cut $S$, where $k\ge 2$.
Assume that $G- S$ has a component $C$ with at least $k$ vertices and $G- (S\cup C)$ has a component with exactly two vertices.
Then $G-S$ has exactly two components and there is a matching $M$ between $S$ and $C$ covering all vertices of $S$. Furthermore, 
for any matching $M'$ between $S$ and $C$ covering all vertices of $S$ and 
for each vertex $x$ of $C\cap V(M')$, the subgraph of $G$ induced by $(C- V(M'))\cup \{x\}$ is connected randomly matchable.
\end{lemma}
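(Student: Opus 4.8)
The plan is to reduce the main assertion to Lemma~\ref{lem:matchingCut} by building one carefully chosen auxiliary matching; the two preliminary claims are disposed of first. Since $G$ is factor-critical, $|V(G)|$ is odd, and together with $|S|=k$, $|C|\ge k$ and the presence of the two-vertex component this forces $|V(G)|\ge 2k+3$ (the value $2k+2$ being even). Hence Lemma~\ref{lemma:two-components} applies and $G-S$ has exactly two components. As $C$ and the two-vertex component $D=\{d_1,d_2\}$ are distinct components of $G-S$ (the latter lives in $G-(S\cup C)$), these are the only two; in particular every neighbour of $d_2$ lies in $S\cup\{d_1\}$. The existence of a matching $M$ between $S$ and $C$ covering all of $S$ is immediate from Lemma~\ref{lem:independentEdges} applied to $C$ and $X=S$, using $|C|\ge k$.

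For the main claim, fix a matching $M'$ between $S$ and $C$ covering $S$ and a vertex $x\in C\cap V(M')$; let $s_0\in S$ be the partner of $x$, so $xs_0\in M'$. The idea is to construct a matching of $G$ whose set of uncovered vertices, after deleting one isolated vertex, is exactly $(C-V(M'))\cup\{x\}$. By Lemma~\ref{lem:independentEdges} with $X=\{s_0\}$, the vertex $s_0$ has a neighbour in $D$, say $d_1$. Set $M^{\ast}=(M'-\{xs_0\})\cup\{s_0d_1\}$; this is a matching since $d_1$ lies outside $S\cup C$. It covers $S$, the $k-1$ vertices of $(C\cap V(M'))-\{x\}$, and $d_1$, while returning $x$ to the uncovered part of $C$.

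Now consider $G-V(M^{\ast})$, whose vertex set is $(C-V(M'))\cup\{x\}\cup\{d_2\}$. Since $d_2$ has no neighbour outside $S\cup\{d_1\}$, all of which lie in $V(M^{\ast})$, the vertex $d_2$ is isolated, so $H=\{d_2\}$ is an odd component of $G-V(M^{\ast})$. Applying Lemma~\ref{lem:matchingCut} to $M^{\ast}$ and $H$ yields that $G-(H\cup V(M^{\ast}))$ is connected randomly matchable. Computing the removed vertices gives $H\cup V(M^{\ast})=S\cup D\cup((C\cap V(M'))-\{x\})$, so the remaining induced subgraph is precisely $(C-V(M'))\cup\{x\}$, as required. (A parity count also shows $|C|\ge k+1$, so this subgraph is nondegenerate.)

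The only genuine obstacle is spotting the right modification of $M'$: instead of matching $S$ into $C$ alone, one reroutes the single edge $xs_0$ to $D$ so that exactly one vertex of $D$ is stranded, producing the isolated odd component demanded by Lemma~\ref{lem:matchingCut} while simultaneously freeing $x$ back into $C$. Everything else is routine bookkeeping; in particular the edge $d_1d_2$ is never used, only the fact that $D$ is a component of size two.
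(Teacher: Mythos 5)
Your proof is correct and follows essentially the same route as the paper: reroute the single edge $xs_0$ of $M'$ to a neighbour of $s_0$ in $D$, observe that the other vertex of $D$ becomes an isolated odd component, and apply Lemma~\ref{lem:matchingCut}. The only (harmless) difference is that you establish the two-components claim up front via the parity bound $|V(G)|\ge 2k+3$ and Lemma~\ref{lemma:two-components}, whereas the paper lets it fall out of the connectivity conclusion of Lemma~\ref{lem:matchingCut}.
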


\begin{proof}
Existence of a matching $M$ between $S$ and $C$ covering all vertices of $S$ is a consequence of Lemma~\ref{lem:independentEdges}. 
Let $M$ be any matching between $S$ and $C$ covering all vertices of $S$ and let $D$ be a component of $G- (C\cup S)$ with exactly two vertices.
Let $x$ be any vertex of $C$ covered by $M$ and let $s$ be the vertex of $S$ matched by $M$ with $x$.
Lemma~\ref{lem:independentEdges}
  implies that there is a vertex of $D$, say $d$, adjacent to $s$. Let $d'$ be the vertex of $D$ different from $d$. 
  Consider the set $M' = (M- \{sx\})\cup \{ds\}$; clearly $M'$ is a matching and  $\{d'\}$ is an odd component of $G- V(M')$.
 Thus 
by Lemma~\ref{lem:matchingCut} the graph $G-(V(M')\cup \{d'\})=(C- V(M))\cup \{x\}$ is connected randomly matchable,
which completes the proof.
\end{proof}

The following theorem provides a characterisation of $k$-connected equimatchable factor-critical graphs with a $k$-cut $S$ such that $G-S$ contains a component with at least $k$ vertices and a component with precisely $2$ vertices. We indicate the end of a proof of a claim by $\blacksquare$.

\begin{theorem}
\label{thm:kCutEFC}
Let $G$ be a $k$-connected equimatchable factor-critical graph with a $k$-cut $S$, where $k\ge 3$.
Assume that $G- S$ has a component $C$ with at least $k$ vertices and $G- (S\cup C)$ has a component with exactly two vertices.
Then $G- S$ has exactly two components. Furthermore,  if $S$ contains an edge, then $C$ is a complete graph.
If $S$ does not contain an edge,  
then there is a nonegative integer $m$ and sets $\{x_1,\ldots, x_m\}$ of vertices of $C$ and  $\{y_1,\ldots, y_m\}$ of vertices of $S$ such that  $x_iy_i$ is not an edge of $G$ for every $i\in \{1,\ldots, m\}$ and $C\cup S\cup \{x_1y_1,\ldots, x_my_m\}$ is isomorphic with $K_{n,n+1}$ for some $n$.
\end{theorem}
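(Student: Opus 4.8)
The plan is to first apply Lemma~\ref{lemma:k-and-two}, which immediately gives that $G-S$ has exactly two components and supplies the key tool: for every matching $M'$ between $S$ and $C$ covering all of $S$, and every vertex $x\in C\cap V(M')$, the subgraph induced by $(C-V(M'))\cup\{x\}$ is connected randomly matchable, hence isomorphic to $K_{2t}$ or $K_{t,t}$. The whole argument will consist of exploiting the freedom to vary $M'$ and the vertex $x$ in order to pin down the global structure of $C$ (and, in the bipartite case, the cross edges to $S$).

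For the case where $S$ contains an edge, say $s_1s_2\in E(G)$, I would argue that $C$ must be complete. First I would fix a matching $M$ between $S$ and $C$ covering $S$, and use the edge $s_1s_2$ together with Lemma~\ref{lem:independentEdges} to produce, for any target vertices of $C$, a covering matching $M'$ that leaves two prescribed vertices $u,w\in C$ uncovered while keeping a third vertex $x$ covered; then $(C-V(M'))\cup\{x\}$ being randomly matchable and containing $u,w,x$ forces structural constraints. The idea is that when $S$ carries an edge we can ``absorb'' two cut-vertices cheaply, giving us more uncovered room in $C$, and the randomly matchable pieces we obtain will be forced to be complete rather than complete bipartite (a single triangle $u,w,x$ inside such a piece rules out the bipartite option). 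Sliding $x$ and the uncovered pair over all of $C$ should then show every pair of vertices of $C$ is adjacent.

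For the case where $S$ is independent, the conclusion is that $C\cup S$, after adding at most $m$ nonedges $x_iy_i$, is $K_{n,n+1}$. Here I would first establish that each randomly matchable piece $(C-V(M'))\cup\{x\}$ cannot be complete of size $\ge 4$ while simultaneously $S$ forces a bipartition: the plan is to show that $C$ itself is complete bipartite, say with parts $P$ and $Q$, and that every vertex of $S$ joins $C$ ``bipartitely'' in a way consistent with a single global bipartition extending $P\cup S$ versus $Q$ (or similarly). Concretely, by varying $M'$ I expect to show that any two vertices of $C$ in the same randomly matchable piece are nonadjacent precisely according to a fixed 2-colouring; the independence of $S$ then lets $S$ join one side of this colouring, and a counting/parity argument (comparing $|C|$, $|S|=k$, and the randomly matchable sizes) yields that the full graph is balanced-complete-bipartite up to the $m$ missing edges $x_iy_i$, with $n$ and $n+1$ determined by the two part sizes.

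The hard part will be the bipartite case: controlling which cross edges between $S$ and $C$ are present and proving they all respect one global bipartition. The randomly matchable subgraphs only see $C$ minus the matched vertices plus one returned vertex $x$, so a single choice of $M'$ gives only local information, and I must carefully combine many choices of $M'$ (and of which $x$ is returned) to force consistency of the bipartition across all of $C$ and to locate exactly the missing edges $x_iy_i$. I expect the independence of $S$ to be essential precisely in excluding the complete option for the pieces and in guaranteeing that the deficiency edges can be catalogued as the stated pairs; ruling out ``mixed'' behaviour (some pieces complete, others bipartite) and handling the boundary between $C$ and $S$ uniformly is where the bulk of the casework will lie.
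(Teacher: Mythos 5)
Your opening move---invoking Lemma~\ref{lemma:k-and-two} to get exactly two components and to generate randomly matchable pieces $(C-V(M'))\cup\{x\}$ for varying $M'$ and $x$---matches the paper, and varying $M'$ and $x$ is indeed the right instinct. But there is a genuine gap: you never use the two-vertex component $D$ of $G-(S\cup C)$, and the argument cannot go through on the randomly matchable pieces alone. Those pieces only tell you that certain induced subgraphs of $C$ are $K_{2t}$ or $K_{t,t}$; by themselves they cannot distinguish the two options, and they say nothing about edges between $S$ and $C$. In Case A you assert that a triangle $u,w,x$ inside a piece rules out the bipartite option, but you give no mechanism for producing that first triangle---and this is precisely the step that requires $D$. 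The paper's first claim takes the edge $rs$ in $S$, reroutes the matching so that one vertex $d$ of $D$ is matched into $S$ and the other vertex $d'$ of $D$ is isolated, and then completes a matching of $G$ leaving uncovered exactly $d'$ and two of the three vertices $u,v,w\in C$. Since $S$ separates $d'$ from $C$, that matching is maximal unless the two uncovered vertices of $C$ are adjacent, so equimatchability plus factor-criticality forces the edge. Only after this triangle exists can Theorem~\ref{thm:paper1} and the randomly matchable pieces be used to propagate completeness through $X$ and $C'$.

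The same omission is fatal in Case B. Every structural claim there---$X$ independent, $C'\cong K_{n,n+1}$, no edges between $X$ and the smaller part $U$, no edges between $S$ and the larger part $W$, all edges between $X$ and $W$, and the ``adjacent to all but at most one'' statements that produce the exceptional pairs $x_iy_i$---is proved by constructing a matching of $G$ that leaves exactly three vertices uncovered, always consuming $D$ either via its internal edge or via an edge from $S$ into $D$ so that the leftover vertex of $D$ is isolated from $C$, and using the independence of $S$ to guarantee the matching is maximal. Your proposed counting and parity argument comparing $|C|$, $|S|$, and the sizes of the randomly matchable pieces does not control which cross edges between $S$ and $C$ are present, which is exactly where the pairs $x_iy_i$ come from. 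To repair the proof you need to make the two-vertex component the engine of the argument: each forced adjacency in the statement should be obtained by exhibiting a maximal matching of $G$ missing three specified vertices, one of which lies in $D$.
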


\begin{proof}
Let $S=\{s_1,\ldots, s_k\}$ and
let $D=\{d_1,d_2\}$ be a component of $G- (C\cup S)$ with exactly two vertices.
Note that since $G$ is odd, it has at least $2k+3$ vertices and hence, by  Lemma~\ref{lemma:two-components},
$D$ is the only component of $G- (C\cup S)$. 
By Lemma~\ref{lem:independentEdges} there is a matching $M$ between $S$ and $C$ which covers all vertices of $S$. Denote by $X$ the set $C\cap V(M)$ and let $C'=C-X$. 
The fact that $G- S$ has exactly two components follows from Lemma~\ref{lemma:k-and-two}.
The rest of the proof is split into two cases.

\vspace*{2mm}
\noindent {\bf Case A) } There is an edge in $S$.

\begin{claimA}
\label{claimA:triangle-in-X}
 If $rs$ is an edge in $S$ and $u$ and $v$ are the two vertices of $C$ matched by $M$ with $r$ and $s$, respectively, then $\{u,v,w\}$ is a triangle for any vertex $w$ of $X- \{u,v\}$.
\end{claimA}

\begin{cProofA}
	Choose an arbitrary vertex $w$ from $X- \{u,v\}$ and let $t$ be the vertex of $S$ matched by $M$ to $w$.
By Lemma~\ref{lem:independentEdges} there is an edge between $s$ and a vertex of $D$, say $d$. Let $M' = (M-\{ru,sv,tw \})\cup\{rs,td\}$ and denote the only vertex of $D-d$ by $d'$.
Clearly, $M'$ is a matching of $G$ which isolates $d'$.  
Let $x$ be any vertex from $\{u,v,w\}$. 
Applying Lemma~\ref{lemma:k-and-two} to
$C'\cup\{x\}$ and $M$ yields that $C'\cup\{x\}$ is
randomly matchable
and thus 
it has
a perfect matching $M_x$. 
Observe that the set $M_x\cup M'$ is a matching of $G$ which leaves uncovered precisely the vertices in $\{d', u, v, w\}-\{x\}$. Because $\{u,v,w\}\subseteq C$ and 
$S$ is a cut separating $C$ and $D$, there
is no edge between $d'$ and $\{u,v,w\}$.
The  fact that $G$ is equimatchable and factor-critical implies that the two vertices in $\{u, v, w\}-\{x\}$ are joined by an edge. Since $x$ was arbitrary vertex from $\{u,v,w\}$, the claim follows.
\hfill $\blacksquare$
\end{cProofA}

 If $k=3$, then the result follows from Claim~A\ref{claimA:triangle-in-X}. Therefore, from now on we assume $k\ge 4$.

\begin{claimA}
\label{claimA:X-is-complete}
The subgraph of $G$ induced by $X$ is a complete graph.
\end{claimA}
\begin{cProofA}
Let $rs$ be an edge of $S$.
Our aim is to show that there is an edge between  arbitrary two vertices $y$ and $z$ of $X$.
Denote by $x_r$ and $x_s$ the two vertices of $X$ joined by $M$ to $r$ and $s$, respectively. If $y$ or $z$ belongs to $\{x_r,x_s\}$,
 then $y$ and $z$ are joined by an edge by Claim~A\ref{claimA:triangle-in-X}. Hence we can assume that  $\{y,z\}\cap \{x_r,x_s\} = \emptyset$. Claim~A\ref{claimA:triangle-in-X} applied to $\{x_r,x_s,y\}$ shows that $x_ry$ is an edge of $G$. 
Applying Lemma~\ref{lemma:k-and-two} 
to $C'\cup\{z\}$ and $M$ yields that $C'\cup\{z\}$ is  randomly matchable and thus it has a perfect matching $M'$. 
Let $s_y$ and $s_z$ be the vertices of $S$ joined by $M$ to $y$ and $z$, respectively.
Consider the set
$M'' = (M-\{rx_r,ys_y,zs_z\})\cup M' \cup \{yx_r, e\}$, where $e$ is the edge in $D$. It is not difficult to see that $M''$ is a matching which leaves uncovered exactly the vertices $r$, $s_y$, and $s_z$. Hence $\{r,s_y,s_z\}$ contains an edge $e$ and the result follows by using Claim~A\ref{claimA:triangle-in-X} on $e$ and  $\{x_r,y,z\}$.
\hfill $\blacksquare$
\end{cProofA}

\begin{claimA}
\label{claimA:C'-is-complete}
The subgraph of $G$ induced by $C'$ is a complete graph.
\end{claimA}

\begin{cProofA}
 Assume that the edge in $S$ is $rs$. 
Let $x_r$ and $x_s$ be the vertices of $X$ joined by $M$ to $r$ and $s$, respectively, and let $y$ be an arbitrary vertex of $X-\{x_r,x_s\}$. By Claim~A\ref{claimA:triangle-in-X} applied to $rs$ the subgraph of $G$ induced by $\{y,x_r,x_s\}$ is a triangle. Let $s_y$ be the vertex of $S$ joined to $y$ by $M$. By Lemma~\ref{lem:independentEdges} there is a vertex of $D$, say $d$, adjacent to $s_y$. Let $d'$ be the vertex of $D$ different from $d$. Consider the set $M' = M-\{rx_r, sx_s, ys_y\}\cup\{rs,ds_y\}$; clearly $M'$ is a matching isolating $d'$. By Theorem~\ref{thm:paper1} the graph $G-(V(M)\cup\{d'\}) =C'\cup\{y,x_r,x_s\}$ is either $K_{2n}$ or $K_{n,n}$. 
Since  $\{y,x_r,x_s\}$ induces a triangle and is contained in $C'\cup\{y,x_r,x_s\}$,  the graph $C'\cup\{y,x_r,x_s\}$ is a complete graph. In particular $C'$ is a complete graph, as claimed.
\hfill $\blacksquare$
\end{cProofA}

\begin{claimA}
 The subgraph of $G$ induced by $C$ is a complete graph.
\end{claimA}

\begin{cProofA}
 By Claim~A\ref{claimA:X-is-complete} the set $X$ induces a complete graph and by Claim~A\ref{claimA:C'-is-complete} the set $C'$ induces a complete graph. Lemma~\ref{lemma:k-and-two} 
 implies that for each vertex $x$ of $X$ the graph $C'\cup \{x\}$ is connected randomly matchable. It is not difficult to see that if $C'$ is a complete graph, then also $C'\cup \{x\}$ is a complete graph for each $x$ of $X$. It follows that each vertex of $X$ is adjacent to every vertex of $C'$ and thus $C$ is a complete graph, as claimed.
\hfill $\blacksquare$
\end{cProofA}

The preceding claim completes the case where there is an edge in $S$ and   the first part of the proof.

\vspace*{2mm}
\noindent {\bf Case B)} The set $S$ is independent.
\vspace*{2mm}

\begin{claimB}
\label{claimB:X-independent}
The set $X$ is independent.
\end{claimB}

\begin{cProofB}
For a contradiction suppose that 
  $x_1x_2$ is an edge in $X$ and let $x_3$ be an arbitrary vertex of $X-\{x_1,x_2\}$. Let $M'$ be an arbitrary perfect matching of $C'\cup\{x_3\}$, due to Lemma~\ref{lemma:k-and-two} such a matching exists. 
Furthermore, let $s_i$ be the vertex matched by $M$ with $x_i$ for $i=1,2,3$ and let $e$ be the edge in $D$.
The  matching 
$M'' = (M- \{s_1x_1, s_2x_2, s_3x_3\})\cup M' \cup \{x_1x_2,e\}$ 
leaves uncovered only the vertices $s_1$, $s_2$, and $s_3$. By the assumption of Case B) the maching $M''$ is maximal, contradicting the fact that $G$ is equimatchable and factor-critical.
\hfill $\blacksquare$
\end{cProofB}

\begin{claimB}
\label{claimB:Knn+1}
 The subgraph of $G$ induced by $C'$ is isomorphic with $K_{n+1,n}$ for some $n\ge 0$.
\end{claimB}

\begin{cProofB}
If $C'$ contains only one vertex, then the claim holds. Since $C'$ is odd, we can assume  $|V(C')|\ge 3$.
 Lemma~\ref{lemma:k-and-two}  implies that  for each 
$x\in X$
the graph $C'\cup\{ x\}$ is connected randomly matchable.
If $C'\cup \{x\}$ is $K_{m,m}$ for some $x\in X$, then $C'$ is clearly $K_{m,m-1}$ and the claim holds.
For a contradiction suppose that $C'\cup \{x\}$ is $K_{2m}$ for each $x\in X$.
Let $x$ be an arbitrary vertex of $X$, let $M'$ be a perfect matching of $C'\cup \{x\}$, and $bc$ an edge of $M'$ not incident with $x$. 
Observe that both $b$ and $c$ are adjacent with 
each vertex $x'$ of $X$
 since $C'\cup \{x'\}$ is $K_{2m}$.
Let $x_b$ and $x_c$ be two vertices of $X-x$. 
Finally, let $s_x$, $s_b$, and $s_c$, be the vertex matched by $M$ with $x$, $x_b$, respectively $x_c$. 
It follows that
the set $M''$ defined by $M''=(M- \{s_xx,s_bx_b, s_cx_c \})\cup (M'-\{bc \})\cup \{ x_bb,x_cc, d_1d_2\}$ is a matching which covers all vertices of $G$ except $s_x$, $s_b$, and $s_c$.
Because $S$ is independent, $M''$ is a maximal matching leaving uncovered $3$ vertices, which contradicts the fact that $G$ is equimatchable factor-critical and completes the proof of the claim.
\hfill $\blacksquare$
\end{cProofB}

\noindent Denote by $U$ the smaller and by $W$ the larger partite set of $C'$.

\begin{claimB} 
\label{claimB:no-edge-X-U}
There is no edge between $X$ and $U$.
\end{claimB}

\begin{cProofB}
We proceed by contradiction. Suppose that $u$ is a vertex of $U$ adjacent to a vertex $x$ of $X$. Let $s$ be the vertex of $S$ matched with $x$ by $M$ and let $d$ be any vertex of $D$ adjacent to $s$; such a vertex $d$ exists by Lemma~\ref{lem:independentEdges}. Clearly, the set $M'= (M-\{sx\}) \cup \{ds, xu\}$ is a matching of $G$. It is not difficult to see that any maximal matching containing $M'$ leaves unmatched at least two vertices of $W$, which contradicts the fact that $G$ is equimatchable and factor-critical.
\hfill $\blacksquare$
\end{cProofB}

\begin{claimB}
\label{claimB:no-edge-S-W}
There is no edge between $S$ and $W$.
\end{claimB}
\begin{cProofB}
 For a contradiction suppose that there is a vertex $w$ of $W$ adjacent to some vertex $s$ of $S$.
Let $t$ be any vertex of $S-s$ and let $d$ be a vertex of $D$ adjacent to $t$, such a vertex $d$ exists by 
Lemma~\ref{lem:independentEdges}. Furthermore, let $x_s$ and $x_t$ be the vertices of $X$ matched by $M$ with $s$ and $t$ respectively, and 
let $N=(M- \{sx_s,tx_t\})\cup \{sw, td\}$. Clearly, $N$ is a matching of $G$. 
Note that $N$ leaves uncovered $D-d$ and $(C'\cup \{x_s,x_t\})-w$. Since $S$ is a cut, the vertex in $D-d$ is not adjacent with any vertex in $(C'\cup \{x_s,x_t\})-w$. 
Claim~B\ref{claimB:Knn+1} and the choice of $w$ imply that $C'-w$ is $K_{n,n}$ for some $n$. Furthermore, by Claim~B\ref{claimB:no-edge-X-U} there is no edge between $\{x_s,x_t\}$ and $U$. It follows that 
$(C'\cup \{x_s,x_t\})-w$ is a subgraph of $K_{n+2,n}$ and thus any maximal matching of $G$ containing $N$ leaves uncovered $d$ and at least two vertices of $(C'\cup \{x_s,x_t\})-w$, contradicting the fact that $G$ is equimatchable factor-critical.
\hfill $\blacksquare$
\end{cProofB}

\begin{claimB}
\label{claimB:all-edges-X-W}
 Each vertex of $X$ is adjacent to every vertex of $W$.
\end{claimB}

\begin{cProofB}
 Let $x$ be a vertex of $X$ and $w$ a vertex of $W$. By Claim~B\ref{claimB:Knn+1} the graph $C'$ is $K_{n,n+1}$ and by 
the definition of $W$ the vertex 
$w$ lies in the larger partite set of $C'$. It follows that there is a perfect matching $M'$ of $C'-\{w\}$. Let $s$ be the vertex of $S$ matched with $x$ by $M$. By Lemma~\ref{lem:independentEdges} there is an edge $e$ between $s$ and $D$. Let $d$ be the vertex of $D$ not covered by $e$. Let $M'' = (M-\{xs\})\cup M' \cup \{e\}$. Clearly, $M''$ is a matching which covers all vertices of $G$ except $d,x$ and $w$. Since $C$ and $D$ are different components of $G-S$ and $x$ and $w$ lie in $C$, the vertex $d$ is adjacent with neither $x$, nor $w$. Using the fact that $G$ is factor-critical and equimatchable we get that $x$ and $w$ are adjacent, which completes the proof.
\hfill $\blacksquare$
\end{cProofB}

\begin{claimB}
\label{claimB:almost-all-edges-S-X-U}
Each vertex of $S$ is adjacent to either all, or all but one vertices of $X\cup U$.
\end{claimB}
\begin{cProofB}
Suppose to the contrary that there is a vertex $s$ of $S$ and two vertices $v_1$ and $v_2$ from $X\cup U$ such that $s$ is adjacent neither to $v_1$, nor to $v_2$. Let $x$ be the vertex of $X$ matched with $s$ by $M$ and note that $x$ is different from both $v_1$ and $v_2$.
If $v_1\in X$, then let $y_1 = v_1$, otherwise let $y_1$ be an arbitrary vertex of $X-\{x, v_2\}$. Similarly, if $v_2\in X$, then let $y_2 = v_2$, otherwise let $y_2$ be an arbitrary vertex of $X-\{x,y_1\}$. 
Let $t_1$ and $t_2$ be the two vertices of $S$ matched by $M$ with $y_1$, respectively $y_2$.
Let $M'$ be a set of two independent edges between $D$ and $\{t_1, t_2\}$; such two edges exist by  Lemma~\ref{lem:independentEdges}. Recall that 
 the graph $C'$ is isomorphic with $K_{n,n+1}$ by Claim~B\ref{claimB:Knn+1}
 and that each vertex of $X$ is adjacent with every vertex of $W$ 
by Claim~B\ref{claimB:all-edges-X-W}.
Using the last two observations it is not difficult to prove that  $C'\cup\{x,y_1,y_2\}$ has a matching $N$ which covers all vertices of $C'\cup\{x,y_1,y_2\}$ except $v_1$ and $ v_2$; a straightforward case analysis on $|\{y_1,y_2\}\cap \{v_1,v_2\}|$ is left to the reader.
Consider the set $N' = (M- \{sx, y_1t_1, y_2t_2\})\cup M'\cup N$. 
It is easy to see that $N'$ is a matching which covers all vertices of $G$ except $v_1,v_2$, and $s$.
Observe that there is no edge between $v_1$ and $v_2$. Indeed, if one of $v_1,v_2$ belongs to $X$ and the other to $U$, then they are not adjacent by 
Claim~B\ref{claimB:no-edge-X-U}. If both $v_1$ and $v_2$ are from $X$, then they are not adjacent by Claim~B\ref{claimB:X-independent}. Finally, if both $v_1$ and $v_2$ are from $U$, then they are not adjacent by the definition of $U$.
Since by our assumption $s$ is adjacent with neither $v_1$, nor $v_2$, we get a contradiction with the fact that $G$ is equimatchable and factor-critical.
\hfill $\blacksquare$
\end{cProofB}

\begin{claimB}
\label{claimB:almost-all-edges-X-S}
Each vertex  of $X$ is adjacent to either all, or all but one vertices of $S$.
\end{claimB}

\begin{cProofB}
Suppose for the contrary that there is a vertex $x$ of $X$ and two vertices $t_1$ and $t_2$ of $S$ such that $x$ is adjacent to neither $t_1$, nor $t_2$.
 Let $s$ be the vertex of $S$ matched with $x$ by $M$ and let $y_1$ and $y_2$ be the two vertices matched by $M$ with $t_1$ and $t_2$, respectively. By Claim~B\ref{claimB:almost-all-edges-S-X-U} the vertex $s$ is adjacent with at least one of  $y_1$ and $y_2$; without loss of generality we  assume that $s$ is adjacent to $y_1$.
By Lemma~\ref{lemma:k-and-two} the graph $C'\cup\{y_2\}$ is randomly matchable and hence it has a perfect matching $M'$.
Let $M'' = (M- \{sx,t_1y_1,t_2y_2\})\cup M' \cup \{e, sy_1\}$, where $e$ is the edge in $D$.
It is not difficult to see that $M''$ is a matching which covers all vertices of $G$ except $x, t_1$, and $t_2$. 
By the assumption of Case B)~the vertices $t_1$ and $t_2$ are not adjacent and by our assumption $x$ is  adjacent to neither $t_1$, nor $t_2$.
Therefore, $M''$ is a maximal matching leaving uncovered 3 vertices, contradicting
the fact that $G$ is equimatchable and factor-critical.
\hfill $\blacksquare$
\end{cProofB}

\begin{claimB}
\label{claimB:almost-all-edges-U-S}
Each vertex  of $U$ is adjacent to  either all, or all but one vertices of $S$.
\end{claimB}
\begin{cProofB}
Suppose for the contrary that there is a vertex $u$ of $U$ and two vertices $t_1$ and $t_2$ of $S$ such that $u$ is adjacent to neither $t_1$, nor $t_2$.
Let $y_1$ and $y_2$ be the two vertices of $X$ matched by $M$ with $t_1$ and $t_2$, respectively. 
By Lemma~\ref{lemma:k-and-two} the graph $C'$ is isomorphic with $K_{n,n+1}$ and by Claim~B\ref{claimB:all-edges-X-W} both vertices $y_1$ and $y_2$ are adjacent to every vertex from the larger partite set of $C'$. 
Therefore, there exists a perfect matching $M'$ of $C'\cup\{ y_1, y_2\} - \{u\}$.
Let $M'' = (M- \{t_1y_1,t_2y_2\})\cup M' \cup \{e\}$, where $e$ is the edge in $D$.
It is not difficult to see that $M''$ is a matching which covers all vertices of $G$ except $u, t_1$, and $t_2$. 
By the assumption of Case~B) the vertices $t_1$ and $t_2$ are not adjacent, and by our assumption $u$ is adjacent to neither $t_1$, nor $t_2$.
It follows that $M''$ is a maximal matching leaving uncovered 3 vertices, which contradicts the fact that $G$ is equimatchable and factor-critical. The proof of Claim~B\ref{claimB:almost-all-edges-U-S} is now complete.
\hfill $\blacksquare$
\end{cProofB}

Denote by $H$ the subgraph of $G$ induced by $C\cup S$. Claims~B\ref{claimB:no-edge-X-U} and B\ref{claimB:no-edge-S-W} imply that $U\cup W\cup X\cup S = H$ is a bipartite graph with partite sets $X\cup U$ and $S\cup W$. Claim~B\ref{claimB:Knn+1} and the definition of $U$ and $W$ yield that each vertex of $U$ is adjacent to every vertex of $W$.
By Claim~B\ref{claimB:all-edges-X-W} each vertex of $X$ is adjacent to every vertex of $W$. From Claims~B\ref{claimB:almost-all-edges-S-X-U}, B\ref{claimB:almost-all-edges-X-S}, and B\ref{claimB:almost-all-edges-U-S} we get that there is a nonnegative  integer $m$ and sets of vertices $\{t_1,\ldots, t_m\}\subseteq S$ and $\{y_1,\ldots, y_m\}\subseteq X\cup U$ such that $t_iy_i\notin E(G)$ for all $i\in \{1,\ldots, m\}$ and that $H\cup \{t_1y_1,\ldots, t_my_m\}$ is a complete bipartite graph. The proof is now complete.
\end{proof}

The following observation may be easily verified.

\begin{observation}
\label{observation:Knn-two-vertices-removal}
Let $G$ be isomorphic with $K_{n,n}$ for some $n\ge 1$ and let $u$ and $v$ be two vertices of $G$. If $G-\{u,v\}$ is randomly matchable, 
then $u$ and $v$ are adjacent.
\hfill $\square$
\end{observation}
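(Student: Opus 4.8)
The plan is to prove the contrapositive: I would assume that $u$ and $v$ are \emph{not} adjacent and deduce that $G-\{u,v\}$ fails to be randomly matchable. The starting observation is that in $K_{n,n}$, with partite sets $A$ and $B$, two vertices are non-adjacent exactly when they lie in the same partite set. Hence the hypothesis forces $u$ and $v$ into a single part, say $u,v\in A$.

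Next I would identify the graph obtained after deletion. Removing $u$ and $v$ from $K_{n,n}$ leaves the complete bipartite graph on $A-\{u,v\}$ and $B$, that is, $K_{n-2,n}$. Its two sides have sizes $n-2$ and $n$, differing by $2$, so any matching leaves at least two vertices of $B$ uncovered and $K_{n-2,n}$ has no perfect matching. Since by definition a randomly matchable graph must possess a perfect matching, $G-\{u,v\}=K_{n-2,n}$ is not randomly matchable, which establishes the contrapositive.

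The only points requiring a brief check are the small values of $n$: for $n=1$ a single part cannot contain two distinct vertices, so the non-adjacent case never arises and the statement holds vacuously, while for $n=2$ the deleted graph is $K_{0,2}$, a pair of isolated vertices with no perfect matching. I do not expect any genuine obstacle here; the one conceptual ingredient is the definitional fact that being randomly matchable requires a perfect matching, so that a simple size (parity) obstruction to such a matching is enough to finish.
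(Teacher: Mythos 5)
Your proof is correct, and since the paper states this observation without proof (``may be easily verified''), your contrapositive argument---non-adjacent vertices of $K_{n,n}$ lie in one partite set, so deleting them yields the unbalanced graph $K_{n-2,n}$, which has no perfect matching and hence is not randomly matchable---is exactly the intended easy verification. The edge cases $n=1,2$ are handled properly as well.
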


\setcounter{claimA}{0}
\setcounter{claimB}{0}

\setcounter{cProofA}{0}
\setcounter{cProofB}{0}

\begin{theorem}
	\label{thm:Cgek-Dge3}
Let $G$ be a $k$-connected equimatchable factor-critical graph with at least $2k+3$ vertices
and a $k$-cut $S$ such that $G- S$ has two components with at least $3$ vertices, where $k\ge 3$.
    Then $G-S$ has exactly two components and both are complete graphs.
\end{theorem}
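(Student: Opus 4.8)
The plan is to first reduce to two components and then show each of them is complete by exhibiting it, together with part of the cut, as a connected randomly matchable graph and excluding the complete bipartite option.

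Since $|V(G)|\ge 2k+3$, Lemma~\ref{lemma:two-components} gives that $G-S$ has exactly two components $A$ and $B$, each with at least three vertices. By symmetry it suffices to prove that $A$ is complete. The engine of the argument is the following use of Lemma~\ref{lem:matchingCut}. I would choose a matching $M$ using only edges of $G[B\cup S]$ (so that $M$ avoids $A$ entirely) together with an odd component $H\subseteq B$ of $G-V(M)$; concretely, match as much of $S$ as possible into $B$ and leave an odd, connected piece of $B$ uncovered, in the spirit of the matchings built in the proof of Lemma~\ref{lemma:two-components}. Then $R:=G-(H\cup V(M))$ is connected randomly matchable and contains all of $A$. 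The key point that ties this to the two-component structure is that there are \emph{no} edges between $A$ and $B$, while $A$ is connected with at least three vertices and hence contains an edge: if $R$ were $K_{n,n}$ this edge would force $A$ to meet both parts, so any leftover vertex of $B$ in $R$ would be adjacent to some vertex of $A$, a contradiction; and if $R$ were $K_{2n}$, a leftover vertex of $B$ would again be adjacent to $A$, which is impossible. Thus $R$ contains no vertex of $B$, so $R=A\cup S'$ for some $S'\subseteq S$, and this graph is $K_{2n}$ or $K_{n,n}$. Since $A$ is an induced subgraph of $R$, if $R=K_{2n}$ then $A$ is complete and we are done.

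It remains to exclude $R=K_{n,n}$. In that case $A$ is a complete bipartite component $K_{p,q}$, and I would derive a contradiction with equimatchability. Taking two vertices $u_1,u_2$ in the same part of $A$ (hence non-adjacent) and one vertex $w$ of $B$, I would assemble, using the now-known structure of $R$ and Lemma~\ref{lem:independentEdges} to route the cut, a matching covering every vertex of $G$ except $u_1,u_2,w$; as these three are pairwise non-adjacent, such a matching would be maximal while leaving three vertices uncovered, contradicting that $G$ is equimatchable and factor-critical. Observation~\ref{observation:Knn-two-vertices-removal} is the clean way to package this exclusion, since it shows that two vertices of a $K_{n,n}$ whose deletion leaves a randomly matchable graph must be adjacent; producing a second matching whose removal yields $R$ minus two same-part vertices then contradicts their non-adjacency. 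Applying the symmetric argument to $B$ finishes the proof.

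The step I expect to be the main obstacle is constructing the confining matching $M$ together with the odd component $H$ inside one component without disturbing the other. This is transparent when that component has at least $k$ vertices, since then all of $S$ can be absorbed into it by Lemma~\ref{lem:independentEdges}, and for $k=3$ both components automatically have at least $k$ vertices because each has at least three. For $k\ge 4$, however, a component may have fewer than $k$ vertices, so $S$ cannot be matched into a single component and one is forced to route part of $S$ through each side, which risks deleting vertices of $A$ from $R$; handling this case while preserving an edge of $A$ inside $R$, together with the parity bookkeeping (adjoining at most one vertex of $S$ to make $R$ even) and the counting in the complete bipartite exclusion, is where the real work lies.
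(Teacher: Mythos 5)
Your high-level mechanism coincides with the paper's: reduce to two components via Lemma~\ref{lemma:two-components}, confine a matching $M$ together with an odd component $H$ to one side of the cut, invoke Lemma~\ref{lem:matchingCut} to conclude that the other side (possibly plus a leftover cut vertex) is connected randomly matchable, and rule out the $K_{n,n}$ alternative with Observation~\ref{observation:Knn-two-vertices-removal} by rerouting two edges of $M$ onto two vertices in the same partite set. This is exactly how the paper proves, for instance, that an even component $D$ is complete once the other component has at least $k$ vertices, and with the parity bookkeeping you describe it settles $k=3$ entirely. The difficulty is that the cases you defer in your last paragraph are not a loose end: for $k\ge 4$ they are the bulk of the proof, and your outline contains no workable plan for them.

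Concretely, your confinement step requires the component $B$ receiving the matching to have at least $k$ vertices, since otherwise a matching covering all of $S$ must delete vertices of $A$, while leaving part of $S$ uncovered lets $S-V(M)$ reconnect $A$ to the remnant of $B$ so that $H$ and $R$ are no longer under control. Two regimes are therefore untreated. (i) Both components may have fewer than $k$ vertices (e.g.\ $|A|=|B|=k-1$ for $k\ge 5$), and then the strategy fails on both sides; the paper handles this by an entirely different direct argument, fixing two vertices $c,c'$ of one component, matching everything else of that component and the rest of $S$ away via Lemma~\ref{lem:independentEdges}, and letting equimatchability force $cc'\in E(G)$. (ii) When exactly one component $C$ has at least $k$ vertices, the symmetric step is unavailable for $C$ itself, and (when the small component $D$ is odd) also for $D$, since $C-V(M)$ is then even and need not contain an odd component. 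Here the paper proves $C$ complete by showing that $C'\cup\{x\}$, $C'\cup\{x,x'\}$ and $C'\cup\{x,y,z\}$ are randomly matchable for the matched vertices $x,y,z$ and assembling $C$ from these pieces, and proves $D$ complete by the separate analysis of the graphs $D-d_i$ for three independent edges $s_id_i$ between $S$ and $D$. None of this appears in your proposal; as written, your argument is complete only under the extra hypothesis that both components of $G-S$ have at least $k$ vertices.
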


\begin{proof}
By Lemma \ref{lemma:two-components} the graph $G-S$ has precisely two components, denote these components by $C$ and $D$, respectively. 
First we deal with the case where both $C$ and $D$ are strictly smaller than $k$; this case is much simpler. 
Take any two vertices $c$ and $c'$ of a component of $G-S$, say of $C$. Let $l = |V(C)|$. Since $|V(C)| < k$, there are $l$ independent edges between $S$ and $C$ by Lemma~\ref{lem:independentEdges}. Therefore, we can choose a set $M_C$ of $l-2$ independent edges between $S$ and $C-\{c,c'\}$. Since $|V(G)|\ge 2k+3$, by Lemma~\ref{lem:independentEdges} there is a set $M_D$ of $k-l+2$ independent edges between $D$ and $S-V(M_C)$. 
Let $M=M_C\cup M_D$ and observe that $M$ is a matching of $G$.
It is not difficult to see that the vertex $c$ can be  in $G-V(M)$ adjacent only to $c'$, and similarly $c'$ can be adjacent only to $c'$. Since $G$ is equimatchable and factor-critical, the matching $M$ can be extended to a maximum matching of $G$, 
which leaves unmatched precisely one vertex of $G$. Clearly, this is possible only if $c$ and $c'$ are adjacent. Since the choice of $c$ and $c'$ was arbitrary, it follows that both components of $G-S$ are complete, as claimed.

From now on we assume that at least one component of $G-S$, say $C$, has at least $k$ vertices.
    By Lemma~\ref{lem:independentEdges} there is a set $M$ of  $k$ independent edges 
    between $C$ and $S$  covering all vertices of $S$.
Denote by $X$ the set of vertices $C\cap V(M)$ and let $C' = C - X$.
We distinguish two cases.

\medskip   
\noindent {\bf Case A) $D$ is even.} 
First observe that in this case $C'$ is odd and 
 denote by  $H$  an odd component of $C'$.
Clearly, $G$, $M$, and $H$ satisfy the assumptions of Lemma~\ref{lem:matchingCut}  which implies that $G-(H\cup V(M))$ is
connected randomly matchable.
Since $D$ is a component of $G-(H\cup V(M))$, it follows that $H$ is the only component of $C'$
and thus $H\cup V(M)=S\cup C$. Consequently, $D=G-(H\cup V(M))$ and hence $D$     is 
connected 
randomly matchable.
To prove that $D$ is complete we proceed by contradiction and suppose that $D$ is $K_{n,n}$ for some $n\ge 2$. Since $k\ge 3$, by Lemma~\ref{lem:independentEdges} there are at least three independent edges between $D$ and $S$ and at least two of these edges, say $sd$ and $s'd'$, have their endvertices in the same partite set of $D$,
where $d$ and $d'$ are vertices of $D$. 
Let $x$ and $x'$ be the vertices of $X$ matched by $M$ with $s$ and $s'$, respectively.
Let $M'=(M-\{sx,s'x'\})\cup\{sd,s'd'\}$ and let $H'$ be an odd component of $C'\cup\{x,x'\}$.
 By Lemma~\ref{lem:matchingCut} the graph $G-(H'\cup V(M'))$ is 
randomly matchable.
On the other hand, $G-(H'\cup V(M')) = D-\{d,d'\}$ and thus $d$ and $d'$ are 
adjacent by Observation~\ref{observation:Knn-two-vertices-removal}, contradicting the fact that $d$ and $d'$ lie in the same partite set.
 Therefore, we conclude that $D$ is isomorphic with $K_{2n}$.

\begin{claimA}
\label{claimA:Ccup1}
The graph $C'\cup \{x\}$ is connected randomly matchable for each $x\in X$.
\end{claimA}

\begin{cProofA}
Let $s$ be the vertex of $S$ matched by $M$ with $x$.
Since $S$ is a minimum cut, there is a vertex $d$ of $D$ adjacent with $s$. Let $H$ be an odd component of $D-d$ and let $M' =  (M\cup\{ds\})- sx$. Lemma~\ref{lem:matchingCut} applied to $H$ and $M'$ implies that $G-(H\cup V(M'))$ is connected randomly matchable and that $H= D-d$. Therefore, $C'\cup \{x\} = G-(H\cup V(M'))$ and thus $C'\cup \{x\}$ is connected randomly matchable, as claimed.
 \hfill $\blacksquare$
\end{cProofA}

\begin{claimA}
\label{claimA:Ccup3}
For each triple of pairwise distinct vertices $x,y,z$ of $X$
the graph $C'\cup \{x,y,z\}$ is isomorphic with $K_{2n}$ for some $n$.
\end{claimA}

\begin{cProofA}
Let $s_x,s_y$ and $s_z$ be the vertices matched by $M$ with $x,y,$ and $z$, respectively.
By Lemma~\ref{lem:independentEdges} there are three 
pairwise distinct vertices $d_x, d_y,$ and $d_z$ of $D$ adjacent to $s_x, s_y$, and $s_z$, respectively.
 Since $D$ is even and $|D|\ge 4$, the graph $D-\{d_x,d_y,d_z\}$ is odd and thus contains an odd component $H$.  Using  Lemma~\ref{lem:matchingCut} on $H$ and 
$
(M-\{ xs_x,ys_y,zs_z\})
\cup
\{s_xd_x,s_yd_y,s_zd_z\}
$
we get that $C'\cup \{x,y,z\}$ is connected randomly matchable. 
Since  $C'\cup \{v\}$ is connected randomly matchable for each $v\in \{x,y,z\}$ by Claim~A\ref{claimA:Ccup1}, Observation~\ref{observation:Knn-two-vertices-removal} used on all pairs from $\{x,y,z\}$ implies that $\{x,y,z\}$ induces a triangle. The last observation 
implies that 
$C'\cup \{x,y,z\}$ is 
complete
and concludes the proof of the claim.
\hfill $\blacksquare$
\end{cProofA}

Using Claim~A\ref{claimA:Ccup3} on all triples of  vertices of $X$ implies that the graph induced by $C$ is complete, as claimed.

\medskip
 \noindent   {\bf Case B) $D$ is odd.} 
    
Let $l = \min\{|D|,k \}$ and
note that
$l\ge 3$.
Our first aim is to show that $C$ is complete.

\begin{claimB}
\label{claimB:C-RM}
The subgraph of $G$ induced by $C'$ is connected randomly matchable.
\end{claimB}

\begin{cProofB}
 Lemma~\ref{lem:matchingCut} applied to $D$ and $M$ implies that $G-(D\cup V(M))$ is connected randomly matchable.
The claim follows from the fact that $C' = G-(D\cup V(M))$.
\hfill $\blacksquare$
\end{cProofB}

\begin{claimB}
\label{claimB:Ccup2-RM}
For each two vertices $x$ and $x'$ of $X$ the graph $C'\cup \{x,x'\}$ is connected randomly matchable. Furthermore, the vertices $x$ and $x'$ are adjacent.
\end{claimB}

\begin{cProofB}
 Let $s$ and $s'$ be the vertices of $S$ matched by $M$ with $x$ and $x'$, respectively.
 By Lemma~\ref{lem:independentEdges} there are two independent edges $sd$ and $s'd'$, where $d$ and $d'$ are  vertices of $D$. Since $D$ is odd and $|D|\ge 3$, there is an odd component $H$ of $D-\{d,d'\}$. 
Let $M' = (M \cup \{sd,s'd'\}) - \{ xs, x's'\}$. Clearly, $M'$ is a matching and thus Lemma~\ref{lem:matchingCut} applied to $H$ and $M'$ implies that $C'\cup \{x,x'\}$ is connected randomly matchable. If $C'\cup \{x,x'\}$  is a complete graph, then $c$ and $c'$ are adjacent and there is nothing left to prove. If $C'\cup \{x,x'\}$  is $K_{n,n}$ for some $n\ge 2$, then we get that $x$ and $x'$ are adjacent by  Claim~B\ref{claimB:C-RM} and Observation~\ref{observation:Knn-two-vertices-removal}.
\hfill $\blacksquare$
\end{cProofB}

Using Claim~B\ref{claimB:Ccup2-RM} on all pairs of vertices $x$ and $x'$ of $X$ implies that the subgraph of $G$ induced by $X$ is complete. 
Since $C'$ is even, we can thus assume that $|C'|\ge 2$.

\begin{claimB}
\label{claimB:Ccup2-complete}
 For each two vertices $x$ and $y$ of $X$ the graph $C'\cup \{x,y\}$ is $K_{2n}$ for some $n$.
\end{claimB}

\begin{cProofB}
Let $z$ be a vertex of $X-\{x,y\}$.
 By Claim~B\ref{claimB:Ccup2-RM} the graph $C'\cup \{x,y\}$ is connected randomly matchable. Suppose for a contradiction that $C'\cup \{x,y\}$ is $K_{n,n}$ for some $n\ge 2$.  
 Since $C'\cup \{x,y\}$ is $K_{n,n}$ and $n\ge 2$, Claim~B\ref{claimB:Ccup2-RM} implies that  $C'$ is $K_{n-1,n-1}$. Let $A$ and $B$ denote the partite sets of $C'$. Since $x$ and $y$ are adajcent, without loss of generality we may assume that 
the partite sets of $C'\cup \{x,y\}$ are $A\cup \{x\}$ and $B\cup\{y\}$.
In particular, $x$ is not adjacent to any vertex of $A$ and thus $C'\cup\{x,z \}$ is not a complete graph. Similarly, $C'\cup \{y,z\}$ is also not a complete graph.
Therefore, Claim~B\ref{claimB:Ccup2-RM} used on $x$ and $z$ imply that  $z$ is adjacent with all vertices of $A$ and it is not adjacent to any vertex in $B$.
However, Claim~B\ref{claimB:Ccup2-RM} used on $y$ and $z$ implies that at least one of $y$ and $z$ is adjacent to all vertices of $B$,
which is a contradiction.
\hfill $\blacksquare$
\end{cProofB}

We conclude that $C$ is complete by using Claim~B\ref{claimB:Ccup2-complete} on all pairs of vertices $x$ and $x'$ of $X$.

\medskip
Now we prove that $D$ is complete. By Lemma~\ref{lem:independentEdges} 
there is a set of $l$ independent edges 
$\{s_1d_1,\ldots, s_ld_l\}$ between $S$ and $D$, where $d_1,\ldots, d_l$ are vertices of $D$. For each $i\in \{1,\ldots, l\}$ denote the graph $D-d_i$ by $D_i$ and let $x_i$ be the vertex of $X$ matched by $M$ with $s_i$.

\begin{claimB}
\label{claimB:Di-RM}
For each $i\in \{1,\ldots, l\}$ the graph $D_i$ is connected randomly matchable.
\end{claimB}

\begin{cProofB}
Let $M_{i}=(M-\{s_{i}x_i\})\cup \{d_{i}s_i\}$ and
 let $H_{i}$ be an odd component of  
$C'\cup\{x_i\}$. 
Lemma~\ref{lem:matchingCut} applied to $G$, $M_{i}$, and $H_{i}$ yields that $G-(H_{i}\cup V(M_{i}))$ is connected randomly matchable  and thus  $H_{i}$ is the only component of $C'\cup\{x_i\}$.
Consequently, $D_i = G-(H_{i}\cup V(M_{i}))$ and thus $D_i$ is connected randomly matchable, as claimed.
\hfill $\blacksquare$
\end{cProofB}

Since $l\ge 3$, it is easy to see that if $D$ contains only three vertices, then Claim~B\ref{claimB:Di-RM} for $i=1,2$, and $3$ implies that $D$ is complete.
Therefore,   we can assume  $|V(D)|\ge 5$. 

\begin{claimB}
\label{claimB:Di-complete}
If  $D_i$ is  a complete graph for some $i\in\{1,\ldots, l\}$, then $D$ is a complete graph.
\end{claimB}

\begin{cProofB}
Assume that $D_i$ is a complete graph for some $i\in\{1,\ldots, l\}$. It is easy to see that for any $j\in\{1,\ldots, l\}$ the graph $D_i-d_j$ contains a triangle. Since 
$D_i - d_j$ is contained in $D_j$, we get that $D_j$ is a complete graph for each $j\in\{1,\ldots, l\}$ by Claim~B\ref{claimB:Di-RM}. The proof of the claim is concluded by observing that for each pair of vertices $d$ and $d'$ of $D$ there is some $m\in \{1,\ldots, l\}$ such that both $d$ and $d'$ are contained in $D_m$.
\hfill $\blacksquare$
\end{cProofB}

\begin{claimB}
\label{claimB:D=Kn+1n}
If there is a pair of integers $i$ and $j$ from $\{1,\ldots, l\}$ such that
both $D_i$ and $D_j$ are isomorphic with $K_{n,n}$ for some $n$, then $d_i$ and $d_j$ are not adjacent.
\end{claimB}

\begin{cProofB} 
Let $m$ be an integer from $\{1,\ldots, l\}-\{i,j\}$ and note that $D_m$ is connected randomly matchable by Claim~B\ref{claimB:Di-RM}.
Observe that $D_m$ is $K_{n,n}$, since otherwise $D_m - d_i \subseteq D_i$ would contain a triangle. 
Since $D_j$ is $K_{n,n}$, the graph $D_j-d_i$ is $K_{n,n-1}$.
Let $A$ denote the set of vertices of $D$ lying in the larger partite set of $D_j$. By comparing $D_j$ and $D_j-d_i$ it is easy to see that 
 $d_i$ is adjacent to all vertices of $A$.
Furthermore, $D_i  = (D_j - d_i) \cup \{d_j\}$ and thus also $d_j$ is adjacent to all vertices of $A$. 
It follows that  both $d_i$ and $d_j$ are 
in $D_m$ adjacent to all vertices of $A\cap D_m$.  
The fact that $|V(D)|\ge 5$ implies  $n\ge 2$ and thus $A\cap D_m$ contains a vertex $d$. 
The proof is concluded by observing that 
$d_i$ and $d_j$ are not adjacent, since otherwise $D_m$ would contain the triangle $\{d,d_i,d_j\}$.
\hfill $\blacksquare$
\end{cProofB}

Recall that $l\ge 3$ and observe that if one of  $D_1, D_2,$ and $D_3$ is a complete graph, then we are done by Claim~B\ref{claimB:Di-complete}. Therefore, we can assume that $D_1,D_2,$ and $D_3$ are $K_{n,n}$ for some integer $n$. 
Let $M' = (M-\{s_1x_1,s_2x_2,s_3x_3 \})\cup \{s_1d_1,s_2d_2,s_3d_3\}$, and let $H'$ be an odd component of 
    $C'\cup\{x_1, x_2, x_3\}$.
Clearly, $G, M'$, and $H'$ satisfy the assumptions of Lemma~\ref{lem:matchingCut}, which in turn implies that $G- (H'\cup V(M')) = D-\{d_1,d_2,d_3\}$ is
connected randomly matchable.  Since $D-d_1$ is $K_{n,n}$ for some $n$ by our assumption, Observation~\ref{observation:Knn-two-vertices-removal} implies that $d_2$ and $d_3$ are adjacent. On the other hand, Claim~B\ref{claimB:D=Kn+1n} yields that $d_2$ and $d_3$  are not adjacent, which is a contradiction. The proof is now complete.
\end{proof}

We conclude this section by showing that the requirement on the number of vertices in Lemma~\ref{lemma:two-components} cannot be relaxed.
More precisely,  for every $k\ge 3$ we construct a $k$-connected equimatchable factor-critical graphs with $2k+1$ vertices and a $k$-cut $S$ such that $G-S$ has $k$ components and show that this bound is tight.

\begin{prop}
\label{prop:many-components}
Let $G$ be a $k$-connected equimatchable factor-critical graph with a $k$-cut $S$ for any $k\ge 3$. Then $G-S$ has at most $k$ components and this bound is tight for every $k\ge 3$.
\end{prop}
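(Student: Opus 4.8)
The plan is to prove the upper bound first and then exhibit the extremal graph. For the bound, I would begin with the trivial counting inequality that $G-S$ has at most $|V(G)|-k$ components, since $G-S$ has $|V(G)|-k$ vertices and each component contributes at least one. Hence if $G-S$ had $l\ge k+1$ components, then $|V(G)|\ge 2k+1$. Because $G$ is factor-critical it is odd, so $|V(G)|$ is either $2k+1$ or at least $2k+3$. The case $|V(G)|\ge 2k+3$ is immediately dispatched by Lemma~\ref{lemma:two-components}, which forces exactly two components; since $k\ge 3$ this contradicts $l\ge k+1\ge 4$. The whole difficulty is therefore concentrated in the single residual case $|V(G)|=2k+1$.

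In that case $\sum_i|H_i|=k+1$ and $l\ge k+1$ together force $l=k+1$ and \emph{every} component to be a single vertex. Each such singleton has its only possible neighbours in $S$, and by $k$-connectivity it must have degree at least $k$, so it is adjacent to all of $S$. I would then invoke factor-criticality directly: for any $s\in S$ the graph $G-s$ has $2k$ vertices, of which the $k+1$ singletons are pairwise non-adjacent and have all their neighbours inside $S-s$, a set of only $k-1$ vertices. Thus any matching of $G-s$ covers at most $k-1$ of the $k+1$ singletons, leaving at least two uncovered, so $G-s$ has no perfect matching, contradicting that $G$ is factor-critical. This yields $l\le k$. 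I expect this to be the main conceptual point of the argument: a purely equimatchability-based ``count the odd components after a matching saturating $S$'' approach breaks down precisely here, because when $|V(G)|=2k+1$ a saturating matching leaves only one uncovered vertex outside $S$ and so cannot expose several odd residual components; it is factor-criticality, not equimatchability, that rules out $l=k+1$.

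For tightness I would take, for each $k\ge 3$, the graph $G_k$ obtained from the complete bipartite graph $K_{k,k+1}$ by adding a single edge $e=d_1d_2$ joining two vertices of the larger part $B$; here $S$ is the smaller part, of size $k$. This graph has $2k+1$ vertices, $S$ is a $k$-cut, and $G_k-S$ consists of the edge $e$ together with $k-1$ isolated vertices, i.e.\ exactly $k$ components. It is $k$-connected because $K_{k,k+1}$ is $k$-connected and adding an edge cannot decrease connectivity, and a short check (removing a vertex of either part and matching the remainder, using $e$ when an $S$-vertex is deleted) shows it is factor-critical; both verifications are routine and I would only indicate them.

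The one step requiring genuine care is equimatchability of $G_k$. The key observation I would use is that the set of unmatched vertices of a maximal matching is independent, and that every independent set of $G_k$ lies entirely in $S$ or entirely in $B$ (any vertex of $S$ is adjacent to every vertex of $B$). If a maximal matching missed at least three vertices — necessarily at least three, since $G_k$ is odd — they would all lie in one part. If they lie in $S$, then maximality forces all of $B$ to be covered, yet the at most $k-3$ matched vertices of $S$ together with the single edge $e$ can cover at most $k-1<k+1$ vertices of $B$, a contradiction; if they lie in $B$, then all of $S$ is covered, forcing $k$ matching edges from $S$ into $B$ and leaving exactly one vertex of $B$ uncovered, contradicting that three were uncovered. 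Hence every maximal matching of $G_k$ is maximum, so $G_k$ is equimatchable and the bound $k$ is attained for every $k\ge 3$.
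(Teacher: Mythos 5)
Your proof is correct and follows essentially the same route as the paper: the upper bound is reduced via Lemma~\ref{lemma:two-components} to the case $|V(G)|=2k+1$ with $k+1$ singleton components, which is killed by the same factor-criticality/perfect-matching count, and your extremal graph ($K_{k,k+1}$ plus one edge in the larger part) is exactly the paper's $G_k$. The only cosmetic difference is that you verify equimatchability of $G_k$ via the independence structure of the uncovered vertices, whereas the paper reduces to the randomly matchable graphs $K_{k,k-1}$ and $K_{k+1,k}$; both checks are valid.
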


\begin{proof}
If $|V(G)|\ge 2k+3$, then $G-S$ has exactly $2\le k$ components by Lemma~\ref{lemma:two-components}. Therefore, we can assume that $|V(G)|\le 2k+1$. Clearly, the number of components of $G-S$ is at most $|V(G-S)| \le k+1$, with equality if and only if  $G-S$ consists from $k+1$ singletons. However, it is easy to see that if $G-S$ consists from $k+1$ singletons, then for arbitrary vertex $s$ of $S$ the graph  $G-s$ cannot have a perfect matching, which  contradicts  factor-criticality of $G$. 
Therefore, the number of components of $G-S$ is at most $k$.

To show that this bound is tight, for each $k\ge 3$ we construct a $k$-connected equimatchable factor-critical graph $G_k$ with $2k+1$ vertices and a $k$-cut $S$ such that $G-S$ has exactly $k$ components.
Let $V(G_k) = C\cup D\cup S$, where $C$ and $S$ form independent sets of $G_k$ with sizes $k-1$ and $k$, respectively, and $D$ is a copy of $K_2$. The edges of $G_k$ are precisely all the edges between $S$ and $G_k-S$ and the edge in $D$.
Clearly, the graph $G_k$ is $k$-connected for every $k\ge 3$. To show that $G_k -v$  has a perfect matching for each vertex $v$,  we distinguish whether $v$ belongs to $S$, $C$, or $D$.
 If $v$ is a vertex of $S$, then a perfect matching of $G_k-v$ can be constructed by taking the edge from $D$ and a matching between $S-v$ and $C$ covering all vertices of $(C\cup S) - v$.
 If $v$ is a vertex of $C$ or $D$, then 
$G_k-v$ contains $K_{k,k}$ as a subgraph and hence also admits a perfect matching.

To prove that any matching $M$ of $G_k$ can be extended to a maximum matching, we distinguish two cases according to whether $M$ contains the edge of $C$ or not. If $M$ contains the edge $c_1c_2$ of $C$, then
$M- \{c_1c_2\}$ is a matching in $G_k- \{c_1,c_2\}$, which in turn is isomorphic to $K_{k,k-1}$. Since $K_{k,k-1}$ is equimatchable, the matching $M-\{c_1c_2\}$ can be extended to a matching  $M$ of $G_k- \{c_1,c_2\}$ covering all but one vertex. It follows that $M'\cup \{c_1c_2\}$ is the desired maximum matching of $G_k$ containing $M$.
If $M$ does not contain the edge of $C$, then $M$ contains only edges from $G_k - E(C)$, which is isomorphic to an equimatchable graph $K_{k+1,k}$. It follows that $M$ can be extended to a matching of $G_k$ covering all but one vertex, which completes the proof.
\end{proof}

We note that Theorem~\ref{thm:Cgek-Dge3} and  \ref{thm:kCutEFC} cannot be extended to  graphs with connectivity $2$. More precisely, for graphs with connectivity 2 neither the fact that $G-S$ has two components with at least three vertices implies that the components are complete, nor  presence, respectively  absence, of an edge in $S$ forces the structure described in Theorem~\ref{thm:kCutEFC}. 



\section{Graphs with independence number $2$}

In this section we investigate the relationship between equimatchability and independence number. We focus on odd $k$-connected graphs with $k\ge 4$, at least $2k+3$ vertices, and a $k$-cut which separates at least two components with at least $3$ vertices and show that such graphs are equimatchable factor-critical if and only if their independence number equals $2$. In one direction, we show that if a graph with independence number $2$ is odd, then it is equimatchable, and if it is even, then it is very close to being equimatchable. In the reverse direction, we use the characterisation of $k$-connected equimatchable factor-critical graphs with at least $2k+3$ vertices and a $k$-cut separating at least two components with at least three vertices from Theorem \ref{thm:Cgek-Dge3} to show that if $k\ge 4$, then all such graphs have independence number $2$. Finally, we provide examples showing that it is not possible to extend these results to graphs in which every minimum cut separates a component with at most $2$ vertices --  even if such graphs are equimatchable factor-critical, they can have arbitrarily large independence number. Note that Proposition~\ref{prop:many-components} from the previous section shows that these result can neither be extended to graphs with at most $2k+1$ vertices, since in such graphs $G-S$ can have $k$ components and hence also independence number at least $k$.

We start with two propositions showing close relationship between equimatchable and almost-equimatchable graphs, and graphs with independence number $2$. In the following proof we assume that the reader is familiar with the concept of Gallai-Edmonds decomposition, see \cite{LP} for details. We use notation consistent with \cite{LP}, more precisely,
$D$ is the set of vertices of $G$ uncovered by at least one maximum matching of $G$. Furthermore, $A$ is the set of vertices of $G-D$ adjacent to at least one vertex of $D$ and $C$ is the set $V(G)-(A\cup D)$. For 
a discussion concerning how Gallai-Edmonds decomposition relates to equimatchable graphs see \cite{LPP}.

\begin{prop}
\label{prop:in-2-equimatchable}
Let $G$ be a graph with independence number $2$. If  $G$ is odd, then $G$ is equimatchable. If  $G$ is even, then
either $G$ is randomly matchable, or
$G$ is not equimatchable, has a perfect matching, and every maximal matching of $G$ leaves uncovered at most two vertices.
\end{prop}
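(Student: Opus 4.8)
The plan is to analyze the structure of a graph $G$ with independence number $2$ via its Gallai-Edmonds decomposition $(D,A,C)$. First I would establish the foundational fact that since $\alpha(G)=2$, the set $D$ of vertices exposed by some maximum matching is very constrained. Recall that in the Gallai-Edmonds decomposition each component of $G[D]$ is factor-critical, hence has independence number at least the number of components minus any edges between them; but since two vertices in distinct components of $G[D]$ are necessarily non-adjacent, $G[D]$ can have at most two components (otherwise picking one vertex from each of three components yields an independent set of size $3$). Moreover a single factor-critical component that is not a single vertex has at least three vertices and, being non-complete would contain two non-adjacent vertices inside it; combined with the independence-number bound this forces each component of $D$ to be either a complete factor-critical graph (i.e.\ an odd clique) or, when there are two components, each to be a single vertex. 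The key structural extraction is therefore: $\alpha(G)=2$ forces $G[D]$ to have exactly one or two components of the permitted shapes, and $A$ is empty or a single vertex.

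Second, I would split into the odd and even cases. If $G$ is odd, the parity of the Gallai-Edmonds decomposition forces the deficiency $\mathrm{def}(G)=c(D)-|A|$ (number of components of $G[D]$ minus $|A|$) to be odd, hence at least $1$; combined with the at-most-two-components bound this pins down $c(D)\in\{1,2\}$ and $|A|\in\{0,1\}$ compatibly. I would argue that a maximal matching $M$ that is not maximum would have to expose at least two vertices $u,v$; these are non-adjacent by maximality (and since $\alpha(G)=2$ they are the \emph{only} exposed vertices, any third exposed vertex producing an independent triple). But then $u,v$ have a common neighbour structure forced by $\alpha(G)=2$: every other vertex is adjacent to at least one of $u,v$, and an augmenting or exchange argument along an $M$-alternating path between $u$ and $v$ (which must exist because the non-exposed vertices form a dominating configuration) would contradict maximality of $M$. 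The cleanest route is to show directly that if two non-adjacent vertices are left exposed by a maximal matching, then since $\alpha(G)=2$ one can always find an $M$-augmenting path, so no maximal matching exposes two or more vertices; hence in the odd case every maximal matching exposes exactly one vertex and $G$ is equimatchable.

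Third, for the even case, the same dichotomy gives that either $G$ has a perfect matching with every maximal matching exposing at most two vertices, or $G$ itself is randomly matchable. Here I would observe that if $G$ has a perfect matching and is equimatchable then, being even with a perfect matching, it is randomly matchable by the Sumner-type result quoted in the Preliminaries (connected randomly matchable graphs are exactly $K_{2n}$ and $K_{n,n}$, and $\alpha=2$ rules out $K_{n,n}$ with $n\ge 3$). Otherwise $G$ is not equimatchable, and the bound that every maximal matching leaves at most two vertices uncovered follows from the same argument as above: a maximal matching exposing three or more vertices would, by $\alpha(G)=2$, expose an independent set of size $3$, an immediate contradiction, so at most two vertices are exposed, and the even parity forces the existence of a perfect matching (the maximum matching exposes an even number of vertices, and an argument analogous to the odd case shows it must be zero).

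The main obstacle I anticipate is the augmenting-path step: translating the combinatorial constraint $\alpha(G)=2$ into the existence of an $M$-augmenting path between two exposed non-adjacent vertices $u$ and $v$. The subtlety is that $u$ and $v$ being non-adjacent with all other vertices dominated by $\{u,v\}$ does not by itself hand us an alternating path; one must use the matching edges and the domination to build the alternating walk and invoke a parity argument (odd alternating walk yields an augmenting path, via the standard blossom/alternating-path lemma for general graphs). I expect this to be where the real work lies, whereas the Gallai-Edmonds bookkeeping and the final appeal to the randomly matchable characterisation are routine. I would handle the augmenting-path claim by contradiction, assuming maximality of $M$ and deriving that the two exposed vertices together with an exposed third vertex or an uncovered matching structure produce an independent set exceeding size $2$.
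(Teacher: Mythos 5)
There is a genuine gap, and it sits exactly where you say ``the real work lies.'' In the odd case the paper's entire argument is: the vertices left uncovered by \emph{any} maximal matching form an independent set, hence there are at most $2$ of them; their number has the same parity as $|V(G)|$, which is odd; hence every maximal matching misses exactly one vertex, so all maximal matchings have the same size and $G$ is equimatchable. You state both ingredients (independence of the exposed set, and parity) but never combine them. Instead you try to rule out a maximal matching exposing two vertices by exhibiting an $M$-augmenting path and claiming this ``would contradict maximality of $M$.'' That is a maximal-versus-maximum confusion: an augmenting path only shows $M$ is not \emph{maximum}; it is perfectly compatible with $M$ being maximal (no edge of $G-V(M)$ can be added). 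Worse, the intermediate claim you aim for --- ``no maximal matching exposes two or more vertices'' --- is simply false for $\alpha(G)=2$ in general: in the triangular prism $\overline{C_6}$ (even, $\alpha=2$, not equimatchable) the matching $\{a_1a_2,\,b_2b_3\}$ is maximal, exposes the two nonadjacent vertices $a_3,b_1$, and admits an augmenting path. So no correct argument can deliver that claim without invoking oddness, and once oddness is invoked the parity count finishes the proof immediately; the augmenting-path/blossom machinery is both broken as stated and unnecessary.

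The even case also has an unaddressed subcase. Your dichotomy is ``equimatchable with a perfect matching (hence randomly matchable by Sumner)'' versus ``not equimatchable,'' and you assert that even parity plus ``an argument analogous to the odd case'' forces a perfect matching. This skips the possibility that $G$ is equimatchable and \emph{every} maximal matching exposes exactly two vertices (equimatchable without a perfect matching), and the analogue of the odd-case argument cannot close it because, as above, that argument does not work. The paper eliminates this subcase with the Gallai--Edmonds deficiency formula $\mathrm{def}(G)=c(D)-|A|$: deficiency $2$ with a nontrivial decomposition forces at least three components of $D$, i.e., an independent triple. Ironically, your opening paragraph sets up a detailed Gallai--Edmonds analysis, but deploys it to derive structure the proposition never needs (odd-clique components, $|A|\le 1$), while the one place the decomposition is actually required goes unaddressed. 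Once that subcase is excluded, ``not equimatchable'' plus ``every maximal matching exposes $0$ or $2$ vertices'' yields the perfect matching for free, with no alternating-path argument at all.
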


\begin{proof}
For any maximal matching $M$ the set of vertices not covered by $M$ induces an independent set. Hence any maximal matching of a graph with independence number $2$ leaves uncovered at most $2$ vertices. Since the parity of the number of vertices not covered by a matching is the same as the parity of the number of vertices of the graph, if $G$ is odd, then any maximal matching of $G$ leaves uncovered exactly one vertex. Consequently, all maximal matchings of $G$ have the same size and $G$ is equimatchable. If $G$ is even, then every maximal matching of $G$ leaves uncovered $0$ or $2$ vertices.
We distinguish two cases: either $G$ is equimatchable, or not. If $G$ is equimatchable with a perfect matching, then it is isomorphic with $K_{2n}$ or $K_{n,n}$ for some nonnegative integer $n$ by \cite{sumner}. Suppose that $G$ is equimatchable and every maximal matching of $G$ leaves uncovered exactly $2$ vertices, our aim is to show that there are no such graphs. Since $G$ is even, it cannot be factor-critical. Furthermore, $G$ does not have a perfect matching and thus it has a nontrivial Gallai-Edmonds decomposition. It is well known that the number of vertices uncovered by any maximum matching equals the difference between the number of components in $D$ and the number of vertices in $A$, see for example \cite{LP}. It follows that there are at least $3$ components in $D$, which contradicts the fact that the independence number of $G$ is $2$.
The only remaining possibility is that $G$ is not equimatchable, in which case $G$ has a perfect matching, 
every its maximal matching leaves uncovered at most $2$ vertices, and it has a maximal matching which leaves uncovered precisely $2$ vertices.
\end{proof}

Odd graphs with independence number $2$ are described by the following proposition.

\begin{prop}
\label{prop:oddIS2}
Let $G$ be a connected odd  graph with independence number $2$. Then $G$ is either
factor-critical, or an union of two complete graphs, one even and one odd, joined by a set of pairwise incident edges.
\end{prop}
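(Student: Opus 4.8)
The plan is to analyze the structure forced by independence number $2$ together with oddness and connectivity. The key observation is that independence number $2$ means the complement graph $\overline{G}$ is triangle-free. First I would recall the standard fact, cited in the paper, that any odd graph with independence number $2$ is factor-critical; so if $G$ is factor-critical we are already in the first alternative and there is nothing to prove. Hence I would assume for the rest of the argument that $G$ is \emph{not} factor-critical, and aim to show $G$ is a union of two complete graphs of opposite parities joined by a set of pairwise incident edges.

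Next I would apply the Gallai-Edmonds decomposition, whose notation $D$, $A$, $C$ is already set up in the excerpt. Since $G$ is not factor-critical but is connected and odd, the decomposition is nontrivial. The number of vertices left uncovered by a maximum matching equals (number of components of $D$) minus $|A|$, and since $G$ is odd with independence number $2$, Proposition~\ref{prop:in-2-equimatchable} (or the same counting argument) forces this deficiency to be exactly $1$. Because the union of one vertex from each distinct component of $G[D]$ is an independent set, having independence number $2$ restricts $G[D]$ to at most two components; combined with the deficiency-$1$ equation I would pin down the exact configuration --- I expect either $C=\emptyset$ with $A$ a single vertex and $D$ having two components, or a similarly rigid split. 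Each component of $G[D]$ is factor-critical and, inheriting independence number constraints, must itself be complete.

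From there I would translate the decomposition back into the claimed structure: the two complete graphs are the closure of the $D$-components (one absorbing the vertices of $A$ and $C$), their parities are forced to be one even and one odd by the overall oddness of $G$, and the edges between them must form a set of pairwise incident edges (a ``star''-like join) precisely because any two independent such cross-edges would create an independent set of size $3$. The pairwise-incidence condition is exactly the complement-triangle-free condition applied to the bipartite-like cut between the two blocks, so I would verify it by a short contradiction argument: two disjoint cross non-edges would yield three mutually non-adjacent vertices.

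The hard part will be the bookkeeping in the Gallai-Edmonds step --- correctly ruling out configurations with more than two $D$-components, handling the role of $C$ and $A$, and showing each block is genuinely complete rather than merely factor-critical. In particular I must be careful that the two complete graphs have the correct parities and that \emph{every} edge between them is accounted for by the pairwise-incidence structure; the independence-number-$2$ hypothesis is what makes all of these rigidity statements go through, so the main obstacle is marshalling it consistently at each stage rather than any single deep step.
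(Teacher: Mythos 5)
Your plan is correct, but it reaches the dichotomy by a genuinely different route than the paper. You work directly with the Gallai--Edmonds decomposition: deficiency $\le\alpha(G)=2$ and odd order force deficiency exactly $1$, so $c(D)-|A|=1$; picking one vertex per component of $G[D]$ shows $c(D)\le 2$; the case $c(D)=1$, $A=\emptyset$ gives a single factor-critical component (the first alternative, using connectivity), and the case $c(D)=2$, $|A|=1$ forces $C=\emptyset$ (a vertex of $C$ together with one vertex from each $D$-component would be independent), complete $D$-components, and a single vertex $v=A$ adjacent to all of at least one component --- which is exactly the second alternative. The paper instead splits on connectivity: if $G$ is $2$-connected it first deduces equimatchability from Proposition~\ref{prop:in-2-equimatchable} and then invokes the Lesk--Plummer--Pulleyblank theorem that $2$-connected equimatchable graphs are bipartite or factor-critical (bipartiteness being impossible here since each partite set has size at most $2$ and $G$ is odd), while the cutvertex case is handled by the same elementary independence arguments you use for $D_1$, $D_2$, and $v$. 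Your approach buys independence from the equimatchability machinery and from the cited structure theorem, at the cost of invoking the full Gallai--Edmonds theorem; the paper's is shorter given the results it already has on hand. Two small points to fix when writing it up. First, the ``standard fact'' you open with --- that every odd graph with independence number $2$ is factor-critical --- is false as stated (the second alternative of this very proposition, e.g.\ $K_4$ and $K_3$ joined through one vertex of the $K_4$, is a connected odd counterexample); your argument does not actually use it, so simply delete that sentence. Second, the pairwise incidence of the cross edges should not be argued via ``two disjoint cross non-edges'': it follows for free because $D_1$ and $D_2$ are distinct components of the induced subgraph $G[D]$, so every edge between the two complete blocks passes through the unique vertex of $A$.
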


\begin{proof}
If $G$ is $2$-connected, then by \cite{LPP} it is either bipartite, or factor-critical. 
If $G$ is factor-critical, then there is nothing to prove. Therefore, we can assume that $G$ is bipartite.
Since each partite set of a bipartite graph form an independent set, each partite sets of $G$ has size at most $2$. From the fact that $G$ is odd follows that $G=P_3$ and thus it has a cutvertex. Therefore, there is no $2$-connected odd bipartite graph with independence number $2$, which completes the proof of the first case.

If $G$ has a cutvertex $v$, then $G-v$ has exactly two components, otherwise the independence number of $G$ would be at least $3$. Moreover, both components of $G-v$ are complete, since otherwise there would be an independence set with size $3$ consisting from two nonadjacent vertices of one component and any vertex of the second component. If there are two vertices $u$ and $w$ from different components of $G-v$ such that $v$ is not adjacent to neither of them, then again $\{u,v,w\}$ is an independent set of size $3$. Hence $v$ is adjacent with every vertex of at least one component of $G-v$ and $G$ is an union of two complete graphs, one even and one odd, joined by a set of pairwise incident edges.
\end{proof}

We now turn our attention to the independence number of equimatchable factor-critical graphs.

\begin{lemma}\label{lemma:matching-of-S}
Let $G$ be a $k$-connected equimatchable factor-critical graph with a $k$-cut $S$ for some $k\ge 3$. Assume that $G-S$ has precisely two components $C$ and $D$, both of them complete. Then for each vertex $s$ of $S$ there
is a matching $M$ containing only edges from $S- s$ such that $|S- V(M)| = 2$ if $k$ is even and $|S- V(M)| = 3$ if $k$ is odd.
\end{lemma}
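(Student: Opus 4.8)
The plan is to reduce the statement to a lower bound on the maximum size $\nu$ of a matching using only edges of $S-s$, and to extract this bound from equimatchability by embedding such a matching into a near-perfect matching of all of $G$. First I would record the relevant parities. As $G$ is factor-critical it is odd, so $|C|+|D|+k$ is odd; hence $C$ and $D$ have opposite parity when $k$ is even and equal parity when $k$ is odd. In particular $G-S$ has an odd component whenever $k$ is even, whereas for $k$ odd its two components are either both even or both odd. Since $C$ and $D$ are complete, an even component has a perfect matching and an odd one has a matching missing a single, freely chosen vertex.

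Fix $s$ and let $M_0$ be a maximum matching using only edges of $S-s$, and let $I'\subseteq S-s$ be its uncovered set; being the uncovered set of a maximal matching, $I'$ is independent in $G$. The core step is to extend $M_0$ to a matching $M'$ of $G$ whose uncovered set exceeds $I'$ by as few vertices as possible. I would match each of $C$ and $D$ by internal edges as fully as possible, and handle $s$ as follows: if $G-S$ has an odd component $P$, use Lemma~\ref{lem:independentEdges} to pick a neighbour $p_0\in P$ of $s$ and put the edge $sp_0$ into the matching, so that $P-p_0$ becomes even and is matched perfectly; otherwise leave $s$ uncovered. A short parity count then shows that the uncovered set of $M'$ is $U=I'\cup Z$, where the extra set $Z$ satisfies $|Z|=0$ if $k$ is even and $|Z|=1$ if $k$ is odd.

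It then remains to read off the bound. Because $I'$ is independent, every edge of the subgraph induced by $U$ meets $Z$, so this subgraph has a matching of size at most $|Z|$. Extending $M'$ to a maximal matching of $G$ can only add edges lying inside $U$, and by equimatchability together with factor-criticality that maximal matching leaves exactly one vertex uncovered; comparing the two uncovered sets gives $|U|\le 2|Z|+1$, hence $|I'|\le |Z|+1$. For $k$ even this yields $|I'|\le 1$ and therefore $\nu=(k-2)/2$, while for $k$ odd it yields $|I'|\le 2$ and therefore $\nu\ge (k-3)/2$. A submatching of $M_0$ of size $(k-2)/2$ when $k$ is even, respectively $(k-3)/2$ when $k$ is odd, uses only edges of $S-s$ and leaves uncovered exactly $2$, respectively $3$, vertices of $S$, which is the desired matching.

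The step I expect to be the main obstacle is the construction of $M'$ with the smallest possible $Z$: one must simultaneously juggle the parities of $C$, $D$, and the treatment of $s$. It is precisely the option of absorbing $s$ into an odd component, thereby making that component even, that removes the final leftover vertex and produces the sharp value of $|Z|$; without this device the estimate loses a unit and only gives $\nu\ge (k-4)/2$ or $\nu\ge(k-5)/2$, which is too weak. Once $M'$ is constructed the remaining deductions are routine counting.
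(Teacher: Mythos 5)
Your proposal is correct and rests on exactly the same ingredients as the paper's proof: saturate the complete components $C$ and $D$ internally according to the parity dictated by $k$, absorb $s$ into an odd component via an edge $sp_0$ when one exists, and invoke equimatchability plus factor-criticality to force the extension to leave only one vertex uncovered. The only difference is one of bookkeeping direction — the paper extends the matching on $C\cup D\cup\{s\}$ and reads off the added edges inside $S$ as the desired matching, while you fix a maximum matching of the subgraph induced by $S-s$ up front and lower-bound its size — so this is essentially the same argument.
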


\begin{proof} 
Let $s$ be a given vertex of $S$ and first assume that $k$ is even. One of the components of $G- S$, say $C$, is odd and the other is even. Let $c$ be a vertex of $C$ adjacent to $s$ and denote by $M_D$ and $M_C$ a perfect matching of $D$ and of $C- c$, respectively.
 Clearly, the set $N$ defined by $N=M_D\cup M_C\cup \{sc\}$ is a matching of $G$ and hence it can be extended to a matching $N'$ leaving only one vertex of $G$ uncovered. 
The only vertex not covered by $N'$ lies in $S$, 
therefore $N'- N$ is the desired matching.

In the rest of the proof we assume that $k$ is odd, which implies that  $|C|$ and $|D|$ have the same parity. First we consider the case where both $|C|$ and $|D|$ are even.
Let $M_C$ and $M_D$ be perfect matchings of $C$ and $D$, respectively. 
Since $G$ is equimatchable and factor-critical, the matching $M_C\cup M_D$ can be extended to a matching $N$ leaving only one vertex $s'$ of $G$ uncovered. Note that necessarily $s'$ lies in $S$.
Let $M = N- M_C\cup M_D$. If $s=s'$, then let $e$ be an arbitrary edge of $M$, otherwise let $e$ be the edge of $M$ incident with $s$. It is easy to see that $M- e$ is the desired matching.
\\
Finally we consider the case where both $|C|$ and $|D|$ are odd.
Let $s'$ be a vertex of $S$ different from $s$ and let 
$c$ be a vertex of $C$ and $d$ a vertex of $D$ adjacent to $s$ and $s'$, respectively. 
Furthermore, let $M_C$ and $M_D$ be perfect matchings of $C-c$ and $D-d$, respectively. 
Since $G$ is equimatchable and factor-critical, the
matching $N$ defined by $N=M_C\cup M_D\cup \{sc,s'd\}$ can be extended to a matching $N'$ leaving uncovered only
one vertex of  $G$. 
As in the previous cases, it is easy to see that the vertex uncovered by $N'$ lies in $S$. Therefore, 
 $N'-N$ is the desired matching, which completes the proof.
\end{proof}

\begin{lemma}
\label{lemma:unmatched-triple}
	Let $G$ be a $k$-connected  equimatchable factor-critical graph with at least $2k+3$ vertices and a $k$-cut $S$, where $k\ge 4$.
Assume that $G- S$ has two components $C$ and $D$, each with at least $3$ vertices.
Then 
for any vertices $s\in S$, $c\in C$,  and $d\in D$
the subgraph of $G$ induced by $\{c,d,s\}$ contains at least one edge.
\end{lemma}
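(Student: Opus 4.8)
The plan is to prove the contrapositive in spirit: assume for contradiction that some triple $\{c,d,s\}$ with $c\in C$, $d\in D$, $s\in S$ induces no edge of $G$, and then construct a maximal matching of $G$ that leaves these three vertices uncovered, contradicting equimatchability and factor-criticality. Since $G$ is factor-critical, every maximum matching misses exactly one vertex; a maximal matching missing three pairwise-nonadjacent vertices therefore cannot be extended, so exhibiting one yields the contradiction. By Lemma~\ref{lemma:matching-of-S} the cut $S$ is almost matchable within itself, so the main work is to match up the vertices of $C$, $D$, and $S$ while carefully avoiding $c$, $d$, and $s$ and respecting the parities of the two components.

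First I would record the structural information already available: since $G-S$ has two components each with at least $3$ vertices, Theorem~\ref{thm:Cgek-Dge3} applies and tells us that both $C$ and $D$ are complete graphs. This is the crucial input, because a complete graph on an even number of vertices has a perfect matching, and on an odd number of vertices has a near-perfect matching missing any prescribed single vertex. Thus inside $C$ I can perfectly match $C-c$ (or $C$) and inside $D$ I can perfectly match $D-d$ (or $D$), depending on parities; the leftover uncovered vertices of $C$ and $D$ can be pinned to be exactly $c$ and $d$ whenever the parity forces one uncovered vertex there. The goal is to arrange that the only uncovered vertices of $G$ are precisely $c$, $d$, and $s$.

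The heart of the argument is the parity bookkeeping, and this is the step I expect to be the main obstacle. Because $G$ is odd, the numbers $|C|$, $|D|$, and $|S|=k$ satisfy $|C|+|D|+k$ odd, so one must split into cases according to the parities of $|C|$ and $|D|$ (equivalently of $k$). I would use Lemma~\ref{lemma:matching-of-S}, which guarantees a matching $M_S$ using only edges inside $S-s$ and leaving either $2$ or $3$ vertices of $S$ uncovered according to the parity of $k$. The delicate point is to absorb the ``extra'' uncovered vertices of $S$ (beyond $s$) by matching them into $C$ or $D$; here I would invoke Lemma~\ref{lem:independentEdges} to supply enough independent edges between $S$ and a component so that the surplus cut-vertices can be matched to vertices of $C\cup D$ other than $c,d$, while still leaving exactly the desired vertices uncovered. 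Since $k\ge 4$ and $|V(G)|\ge 2k+3$, there is enough room in $C\cup D$ to accommodate these edges, and the completeness of $C$ and $D$ lets me complete the interior matchings after these cross edges are chosen.

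Concretely, in each parity case I would assemble the candidate matching as the union of (i) the self-matching $M_S$ of $S-s$ from Lemma~\ref{lemma:matching-of-S}, (ii) independent edges from Lemma~\ref{lem:independentEdges} matching the remaining uncovered cut-vertices (other than $s$) into $C$ and/or $D$ away from $c$ and $d$, and (iii) perfect or near-perfect matchings of the remaining vertices of the complete graphs $C$ and $D$, arranged so that the only survivors are $c$ and $d$. Verifying that this produces a genuine matching leaving exactly $\{c,d,s\}$ uncovered is routine once the counts are checked. Finally, since by assumption $c,d,s$ are pairwise nonadjacent and $c,d$ lie in distinct components of $G-S$ separated by $S$, no edge of $G$ can be added to cover any of them, so the matching is maximal yet misses three vertices, contradicting the fact that $G$ is equimatchable and factor-critical. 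This contradiction establishes that $\{c,d,s\}$ must contain an edge, as claimed.
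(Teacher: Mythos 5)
Your overall strategy is the same as the paper's: both $C$ and $D$ are complete by Theorem~\ref{thm:Cgek-Dge3}, one builds a matching missing exactly $\{c,d,s\}$ from a matching inside $S-s$ (Lemma~\ref{lemma:matching-of-S}), cross edges from Lemma~\ref{lem:independentEdges}, and near-perfect matchings of the complete components, and then equimatchability plus factor-criticality forces an edge in $\{c,d,s\}$. However, the step you identify as ``the main obstacle'' is exactly where your argument has a genuine gap, and it is not closed by the tools you cite. Lemma~\ref{lem:independentEdges} guarantees $\min(|H|,|X|)$ independent edges between a component and a subset of $S$, but it gives you no control over \emph{which} vertices of $C$ or $D$ those edges hit. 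It can happen that a leftover vertex $s_1$ of $S-(V(M_S)\cup\{s\})$ has $c$ as its \emph{unique} neighbour in $C$, in which case $s_1$ simply cannot be matched into $C-c$. Moreover the parity bookkeeping is more rigid than ``enough room'': for instance when $k$ is odd and both $|C|$ and $|D|$ are odd, Claim~\ref{claim:matching-triple-even-components}-type reasoning requires both leftover cut vertices to be matched into the \emph{same} component (else both $C-V(M)$ and $D-V(M)$ become even and the interior matchings cover $c$ and $d$), so you cannot freely distribute the surplus vertices between $C$ and $D$ to dodge bad adjacencies.

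The paper resolves this with an additional structural claim that your proposal is missing: if a vertex $s'\in S$ is adjacent to only one vertex of one component of $G-S$, then it is adjacent to \emph{all} vertices of the other component. This is proved by a cut-exchange argument --- replace $s'$ in the cut by its unique neighbour $d$ to obtain a new $k$-cut $R$, and apply Theorem~\ref{thm:Cgek-Dge3} (or Theorem~\ref{thm:kCutEFC} when the shrunken component has only two vertices) to $R$. With this claim in hand, the bad configurations can be rerouted: a cut vertex whose only neighbour in $C$ is $c$ can instead be matched into $D$ away from $d$, and a further case analysis (again using Lemma~\ref{lem:independentEdges} on triples of cut vertices) completes the construction. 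Without some substitute for this claim, your assertion that ``the surplus cut-vertices can be matched to vertices of $C\cup D$ other than $c,d$'' is unsupported, and the proof as proposed does not go through.
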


\begin{proof}
Theorem~\ref{thm:Cgek-Dge3} implies that both $C$ and $D$ are complete and that $G-S$ does not have any other components. For the rest of the proof let $c,d$, and $s$ be arbitrary, but fixed, vertices of $G$ such that $c\in C$, $d\in D$, and $s\in S$.
We will need the following two claims.

\begin{claim}
\label{claim:matching-triple-even-components}
If there is a matching $M$  covering all vertices of $S-s$ such that $V(M)\cap \{c,d,s\}= \emptyset$ and both $C-V(M)$ and $D-V(M)$ are odd,  then the subgraph of $G$ induced by $\{c,d,s\}$ contains at least one edge.
\end{claim}

\begin{cProof}
Since both $C$ and $D$ are complete and both $C-V(M)$ and $D-V(M)$ are odd, the subgraphs of $G$ induced by $C-V(M)$ and $D-V(M)$ are odd complete graphs. Therefore, there are matchings $M_C$ and $M_D$ of $C- V(M)$ and $D-V(M)$ covering all vertices of $C-(V(M)\cup\{c\})$ and $D- (V(M)\cup\{ d\})$, respectively. It follows that $M' = M\cup M_C\cup M_D$ is a matching of $G$ covering all vertices of $G$ except $c,d$, and $s$. Since $G$ is equimatchable and factor-critical, $M'$ can be extended to a maximum matching of $G$, that is, a matching covering all but one vertices of $G$. Consequently, the subgraph of $G$ induced by $\{c,d,s\}$ contains at least one edge, as claimed.
\hfill $\blacksquare$
\end{cProof}

\begin{claim}
\label{lemmas:unmatched-triple-claim}
Let $s$ be a vertex from $S$.
If $s$ is adjacent to only one vertex of some component of $G-S$, then $s$ is adjacent to all vertices of the other component of $G-S$. 
\end{claim}

\begin{cProof}
Assume that $s$ is adjacent to a single vertex of $D$, say $d$.
Let $R = (S \cup \{d\}) - s$ and note that $R$ is a $k$-cut of $G$ such that $G-R$ has two components, namely  $C\cup\{s\}$ and $D-d$. 
If $|D|\ge 4$, then both components of $G- R$ have at least $3$ vertices and hence both are complete by Theorem~\ref{thm:Cgek-Dge3}. In particular, $s$ is adjacent to every vertex of $C$, as claimed.
If $|D|=3$, then the components of $G- R$ have size $2$ and at least $k+1$, respectively. 
By Theorem \ref{thm:kCutEFC} either $C\cup \{s\}$ is a complete graph, or $R\cup C\cup \{s\}$ is a complete bipartite graph minus a matching. Since $C$ is a complete graph containing a triangle, the graph $R\cup C\cup \{s\}$ cannot be bipartite and the claim follows.  
\hfill $\blacksquare$
\end{cProof}

By Lemma~\ref{lemma:matching-of-S} there is a matching $M$ containing only edges from $S-s$ such that $|S-V(M)|\le 3$. 
Let  $C' = C- c$, $D' = D- d$, and  $S' = S- (V(M)\cup \{s\})$. We distinguish three cases depending on the parity of $C$ and $D$. 

First let $k$ be even. 
Since $k$ is even, one of the components of $G-{S}$ is even, say $C$, and the other is odd. 
By Lemma~\ref{lemma:matching-of-S} the set $S'$ contains only one vertex, denote it by $s_{1}$. 
If $s_{1}$ is adjacent to some vertex in $C'$, then we are done by Claim~\ref{claim:matching-triple-even-components}. Otherwise, $c$ is the only neighbour of $s$ in $C$ and by Claim~\ref{lemmas:unmatched-triple-claim} the vertex $s_{1}$ is adjacent to all vertices of $D$. Since $k\ge 4$, 
$M$ contains at least one edge, say $s_2s_3$.
By Lemma~\ref{lem:independentEdges} there are two independent edges between $\{s_{2}, s_{3}\}$ and $D$. At least one of the edges, say the one incident with $s_{2}$, does not have $d$ as an endvertex. Denote this edge by $e$. 
Using Lemma~\ref{lem:independentEdges} again yields that there are
two independent edges between $\{s_{1}, s_{3}\}$ and $C$. Clearly, one of these edges is $s_{1}c$ and hence there is an edge $f$ between $s_{3}$ and $C'$. Since $|D|\ge 3$ and $s_{1}$ is adjacent to all vertices of $D$, there is an edge $g$ between $D'$ and $s_{1}$ 
such that $e$ and $g$ are independent.
Applying Claim~\ref{claim:matching-triple-even-components} to the matching $(M-\{s_{2}s_{3}\})\cup\{e,f,g\}$ finishes the proof.

Assume that  $k$ is odd and  that both components of $G-S$ are even. Since $k$ is odd, by Lemma~\ref{lemma:matching-of-S} the set $S'$ contains precisely  two vertices, denote them by $s_{1}$ and $s_{2}$.
 By  Lemma~\ref{lem:independentEdges} there are two independent edges between $S'$ and $C$ and thus at least one of them, say $s_1c'$, is not incident with $c$. 
 If there exists an edge $s_2d'$, where $d\in D'$, then the matching $M\cup \{s_1c',s_2d'\}$ satisfy the assumptions of the Claim~\ref{claim:matching-triple-even-components} and we are done.
If there is no edge between $s_2$ and $D'$, then $d$ is the only vertex of $D$ adjacent to $s_2$. By Lemma~\ref{lem:independentEdges} there are two independent edges between $D$ and $\{s_1,s_2\}$, one of them is necessarily $s_2d$ and thus the other is $s_1d''$, where $d''\neq d$. Furthermore, by Claim~\ref{lemmas:unmatched-triple-claim} the vertex $s_2$ is adjacent to all vertices $C$ and thus there is an edge $s_2c''$, where $c''\neq c$. Applying Claim~\ref{claim:matching-triple-even-components} to the matching $M\cup\{s_1d'',s_2c''\}$ completes the proof of this case.

Finally, if $k$ is odd and also both components of $G-S$ are odd, then again $S'$ has two vertices by Lemma~\ref{lemma:matching-of-S}. 
First observe that if there is a matching $M'$ between $S'$ and $C'$ or between $S'$ and $D'$,  which covers both vertices of $S'$,
 then applying Claim~\ref{claim:matching-triple-even-components} to $M\cup M'$ yields the desired result.
We proceed to show that it is always possible to construct such a matching.
Let $s_1$ and $s_2$ be the vertices in $S'$.
 Since $k$ is odd, the matching $M$ contains at least one edge of $S$, say $s_3s_4$.
 By  Lemma~\ref{lem:independentEdges} there is a set of two independent edges $M_C$ between $S'$ and $C$ and a set of two independent edges between $S'$ and $D$. 
If $M_C$  does not cover either $c$ or $d$, then $M\cup M_C$ is the desired matching  and we are done, similarly for $M_D$. 
Therefore, we can assume that all matchings between $S'$ and $C$ covering $S'$ cover also $c$ and analogously all matchings between $S'$ and $D$ covering $S'$ cover also $d$.
In the rest of the proof we distinguish two cases.

First assume that one of the vertices of $S'$, say $s_1$, is in $C$ adjacent only to $c$. By Claim~\ref{lemmas:unmatched-triple-claim}  the vertex $s_1$ is adjacent to all vertices of $D$. It follows that $s_2$ is in $D$ adjacent only 
to $d$, since othwerwise there would be a set of two independent edges between $S'$ and $D'$. By Lemma~\ref{lem:independentEdges}
there are $3$ independent edges $s_1c_1,s_2c_2,s_3c'$  between $C$ and $\{s_1,s_2,s_3\}$. Since $s_1$ is in $C$ adjacent only with $c$,
we necessarily have $c_1 = c$.
Similarly, by Lemma~\ref{lem:independentEdges}
there are $3$ independent edges $s_1d_1,s_2d_2,s_4d'$ between $D$ and $\{s_1,s_2,s_4\}$. 
Again, since $s_2$ is in $D$ adjacent only to $d$, we have $d_2 = d$. 
It follows that $M' = (M-s_3s_4)\cup \{s_1d_1,s_2c_1,s_3c', s_4d'\}$ satisfy the assumptions of  Claim~\ref{claim:matching-triple-even-components}, which completes the proof of this case.

Second assume that both vertices of $S'$ are adjacent to at least two vertices of both $C$ and $D$. It follows that there is a vertex $c'$ of $C'$ such that there is no edge between $C'-c'$ and $S'$, since otherwise there would be a set of two independent edges between $S'$ and $C'$. Similarly, there is a vertex $d'$ of $D'$ such that there is no edge between $D'-d'$ and $S'$. 
By Lemma~\ref{lem:independentEdges} there are three independent edges between $\{s_1,s_2,s_3\}$ and $C$.  
One of these edges is $s_3c''$, where $c''$ is different from both $c$ and $c'$,  since $s_1$ and $s_2$ are adjacent to precisely  two vertices of $C$. 
Without loss of generality we can assume that  the other two are $s_1c$ and $s_2c'$. 
Similarly, by Lemma~\ref{lem:independentEdges} there are three independent edges between $\{s_1,s_2,s_4\}$ and $D$. Again, one of these edges is $s_4d''$. Since both $s_1$ and $s_2$ are adjacent to precisely two vertices of $C$, each of them is adjacent to both $d$ and $d'$ and thus $s_1d'$ is an edge of $G$.
To  conclude the proof of  it suffices  to observe that the matching $(M-s_3s_4)\cup \{s_1d',s_2c',s_3c'', s_4d''\}$   satisfy the assumptions of  Claim~\ref{claim:matching-triple-even-components}. 
\end{proof}

\begin{lemma}
\label{lemma:efc-in-2}
Let $G$ be a $k$-connected  equimatchable factor-critical graph with at least $2k+3$ vertices and a $k$-cut $S$ such that $G-S$ has two components with at least $3$ vertices, where $k\ge 4$.
Then the independence number of $G$ is $2$.
\end{lemma}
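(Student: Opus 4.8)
The plan is to show that $G$ has no independent set of size $3$, since factor-criticality already rules out independence number $1$ and the presence of two distinct vertices forces independence number at least $2$. By Theorem~\ref{thm:Cgek-Dge3}, both components $C$ and $D$ of $G-S$ are complete graphs, and $S$ is the only cut separating them. Any independent set $I$ of size $3$ can therefore contain at most one vertex from $C$ and at most one vertex from $D$, because $C$ and $D$ are complete. Consequently $I$ must contain at least one vertex of $S$, and in fact the three vertices of $I$ must be distributed among $C$, $D$, and $S$ in one of a few patterns: either one vertex in each of $C$, $D$, and $S$, or else at least two of the three vertices lie in $S$.

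First I would dispose of the case where $I$ meets $C$, $D$, and $S$ in exactly one vertex each, say $I=\{c,d,s\}$ with $c\in C$, $d\in D$, $s\in S$. This is precisely the configuration handled by Lemma~\ref{lemma:unmatched-triple}, which guarantees that the subgraph induced by $\{c,d,s\}$ contains at least one edge; hence such a triple cannot be independent. This is the crucial input and where the real work has already been done.

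The remaining cases are those in which $I$ contains at least two vertices of $S$. I would handle them by exhibiting a suitable matching and invoking equimatchability together with factor-criticality, mirroring the style of the earlier claims. Concretely, if $I=\{s_1,s_2,x\}$ with $s_1,s_2\in S$ (non-adjacent) and $x$ either a third vertex of $S$ or a single vertex of one component, I would build a matching $M$ that covers every vertex of $G$ except the three vertices of $I$: since $C$ and $D$ are complete and odd or even as appropriate, and since Lemma~\ref{lem:independentEdges} supplies enough independent edges between $S$ and the components to saturate the vertices of $S$ outside $I$, one can match up all vertices of $S\setminus I$ to the components and then use the completeness of $C$ and $D$ to cover their remaining vertices by perfect or near-perfect matchings, leaving exactly $I$ uncovered. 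If such a matching $M$ exists and $I$ were independent, then $M$ would be a maximal matching leaving three vertices uncovered, contradicting that $G$ is equimatchable and factor-critical (whose maximum matchings miss exactly one vertex). The construction of $M$ is essentially a parity-and-counting argument: the vertices of $G\setminus I$ must be matchable among themselves, which holds because removing the three vertices of $I$ leaves an even set whose pieces ($C$ minus at most one vertex, $D$ minus at most one vertex, and $S\setminus I$) can be paired using completeness of the components and the independent edges from $S$.

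The main obstacle I anticipate is the bookkeeping in this second family of cases, namely ensuring that the matching $M$ can always be completed to leave exactly the three vertices of $I$ uncovered, across all parities of $|C|$, $|D|$, and $k$, and across the sub-cases according to how many of the three vertices of $I$ lie in $S$. In particular when $|I\cap S|=2$ and $x$ lies in a component, I must check that the component containing $x$ can still be perfectly matched after removing $x$, which uses that the component is complete and has the right parity; and I must verify that the hypothesis $k\ge 4$ together with $|V(G)|\ge 2k+3$ provides enough vertices and enough independent edges (via Lemma~\ref{lem:independentEdges}) to saturate $S\setminus I$ without accidentally covering $x$. These are the same techniques used throughout Theorem~\ref{thm:kCutEFC} and Lemma~\ref{lemma:unmatched-triple}, so the argument should go through, but the case analysis is where errors are most likely to hide.
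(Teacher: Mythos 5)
Your proposal is correct and follows essentially the same route as the paper: invoke Theorem~\ref{thm:Cgek-Dge3} to make both components complete (so an independent set meets each in at most one vertex), dispose of the one-vertex-in-each-of-$C$, $D$, $S$ configuration via Lemma~\ref{lemma:unmatched-triple}, and kill the remaining configurations (at least two vertices of $I$ in $S$) by constructing a matching that covers everything except $I$ and contradicting equimatchability plus factor-criticality. The parity bookkeeping you flag as the main obstacle is exactly what the paper's proof carries out explicitly, using Lemma~\ref{lem:independentEdges} to saturate $S\setminus I$ while keeping the residues of the complete components even.
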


\begin{proof}
By Theorem~\ref{thm:Cgek-Dge3} both $C$ and $D$ are complete and $G-S$ does not have any other components.
 Since both $C$ and $D$ are complete, no independent set of $G$ can contain more than one vertex from any of them. Observe that $G$ cannot have an independent set $\{c,d,s\}$ where $c\in C$, $d\in D$, and $s\in S$ by Lemma~\ref{lemma:unmatched-triple}.

There are two remaining possible types of independent sets of size $3$ in $G$. More precisely,  either $G$ has an independent set consisting of  3 vertices of $S$, or $G$ has an independent set consisting of  2 vertices of $S$ and a vertex of $C\cup D$. For a contradiction suppose that $I$ is such an independent set of size 3 in $G$.
Let $T= I\cap S$ and let $C' = C-I$.
If $C'$ is odd, let $F$ be the set containing an arbitrary edge between $C'$ and $S- T$, otherwise let $F=\emptyset$. Furthermore, if $C'$ is odd, let $T' = T\cup\{s\}$, where
$s$ is the vertex of $S$ incident with the edge in $F$, otherwise let $T' = T$.
It is not difficult to see that if $D$ is odd, then there is a vertex $s'$ in $S- T'$. Therefore, if $D$ is odd, then by Lemma~\ref{lem:independentEdges} there is a vertex $d$ of $D$ adjacent to $s'$ and we set $F'=F\cup \{ds'\}$. If $D$ is even, we set $F'=F$. Let $U$ be the set of vertices covered by the edges in $F'$.
Since $|C\cup D|\ge k-1$ by the assumptions,  Lemma \ref{lem:independentEdges} implies that there is a matching $M$ between $S-(T\cup U)$ and $(C'\cup D) - U$ such that both $C'-V(M\cup F')$ and $D-V(M\cup F')$ are even and $M$ covers all vertices of $S-(T\cup U)$. 
 Finally, let $M_C$ and $M_D$ be  perfect matchings of $C'- (U\cup V(M))$ and $D- (U\cup V(M))$, respectively. It is easy to see that the matching $M'$ defined by $M'=M\cup M_C\cup M_D\cup F'$ covers all vertices of $G$ except $I$. Since $G$ is equimatchable and factor-critical, $M'$ can be extended to a matching leaving uncovered exactly one vertex. It follows that $I$  contains an edge,  contradicting
the fact that it is an independent set.
The proof is now complete.
\end{proof}

The following theorem is the main result of this section.

\begin{theorem}
\label{thm:efc-iff}
Let $G$ be a $k$-connected odd graph with at least $2k+3$ vertices
and a $k$-cut $S$ such that $G- S$ has two components with at least $3$ vertices, where $k\ge 4$. Then $G$ 
has  independence number at most $2$ if and only if 
it is equimatchable and factor-critical.
\end{theorem}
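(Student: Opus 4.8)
The plan is to read the statement as the conjunction of two implications and to assemble each one from results already established, since essentially all of the genuinely technical work has been carried out in the preceding lemmas.

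For the forward implication — that an equimatchable factor-critical $G$ satisfying the hypotheses has independence number at most $2$ — I would simply invoke Lemma~\ref{lemma:efc-in-2}. Its hypotheses ($k$-connected, equimatchable, factor-critical, at least $2k+3$ vertices, a $k$-cut separating two components each with at least $3$ vertices, and $k\ge 4$) coincide exactly with those of the present theorem together with the extra assumptions of equimatchability and factor-criticality, and it yields independence number precisely $2$. Hence this direction is immediate.

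For the reverse implication I would proceed in three short steps. First, I would observe that the independence number cannot be $1$: a graph with independence number $1$ is complete and therefore admits no vertex cut whose removal leaves two components, contradicting the existence of the $k$-cut $S$ for which $G-S$ has two components with at least $3$ vertices. Thus the independence number is exactly $2$. Second, since $G$ is odd with independence number $2$, Proposition~\ref{prop:in-2-equimatchable} immediately gives that $G$ is equimatchable. Third, to obtain factor-criticality I would apply Proposition~\ref{prop:oddIS2}: $G$ is connected (being $k$-connected with $k\ge 4$) and odd with independence number $2$, so it is either factor-critical or a union of two complete graphs joined by a set of pairwise incident edges. The latter graph has a cut-vertex and hence fails to be $2$-connected, contradicting $k\ge 4\ge 2$; therefore $G$ must be factor-critical. (Alternatively, factor-criticality follows directly from the cited fact that every odd graph with independence number $2$ is factor-critical, see \cite{PST:2003}.)

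The point worth stressing is that the real obstacle does not reside in this theorem but in the chain of results on which it rests: Theorem~\ref{thm:Cgek-Dge3} forces both components of $G-S$ to be complete, and Lemma~\ref{lemma:unmatched-triple} — the delicate case analysis, split on the parities of $k$, $C$, and $D$, showing that no triple with one vertex in each of $C$, $D$, and $S$ can be independent — is the heart of Lemma~\ref{lemma:efc-in-2}. Once those tools are in hand the equivalence becomes a clean bookkeeping argument, the only genuine subtleties being the elementary remark excluding independence number $1$ and the use of $2$-connectivity to discard the degenerate (cut-vertex) case of Proposition~\ref{prop:oddIS2}.
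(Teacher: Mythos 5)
Your proposal is correct and follows essentially the same route as the paper: the forward direction is exactly Lemma~\ref{lemma:efc-in-2}, and the reverse direction combines Proposition~\ref{prop:in-2-equimatchable} with Proposition~\ref{prop:oddIS2}. Your two extra remarks (excluding independence number $1$ via the existence of the cut, and using $2$-connectedness to discard the cut-vertex alternative in Proposition~\ref{prop:oddIS2}) are trivial details the paper leaves implicit, not a different argument.
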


\begin{proof}
If $G$ is equimatchable and factor-critical, then its independence number is $2$ by Lemma~\ref{lemma:efc-in-2}.
\\
In the reverse direction, assume that $G$ has independence number $2$. Then $G$ is equimatchable by Proposition~\ref{prop:in-2-equimatchable}.
Furthermore, Proposition~\ref{prop:oddIS2} implies that $G$ is factor-critical, which completes the proof.
\end{proof}

The following theorem reveals further connection between independence number 2 and equimatchable graphs.

\begin{theorem}
A $2$-connected odd graph  $G$ with at least 4 vertices has independence number at most 2 if and only if $G$ is equimatchable and factor-critical and $G\cup\{e\}$ is equimatchable for each edge of the complement $\overline{G}$ of $G$.
\end{theorem}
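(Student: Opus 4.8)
The plan is to prove both directions separately, using the already-established characterisations for the forward direction and a direct matching-construction argument for the converse.

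\medskip

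\noindent\textbf{Forward direction.} Suppose $G$ is a $2$-connected odd graph with at least $4$ vertices and independence number at most $2$. By Proposition~\ref{prop:in-2-equimatchable}, since $G$ is odd it is equimatchable, and by Proposition~\ref{prop:oddIS2} it is factor-critical. It remains to show that $G+e$ is equimatchable for every edge $e$ of $\overline{G}$. The key observation is that adding any edge $e=uv$ to $G$ cannot increase the independence number, and in fact destroys every independent set containing both $u$ and $v$; hence $G+e$ still has independence number at most $2$. Moreover $G+e$ is still odd and has at least $4$ vertices. Applying Proposition~\ref{prop:in-2-equimatchable} to $G+e$ shows it is equimatchable. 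This direction is therefore essentially immediate once one notices that the property ``odd with independence number at most $2$'' is preserved under adding edges.

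\medskip

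\noindent\textbf{Reverse direction.} Assume $G$ is a $2$-connected odd graph with at least $4$ vertices that is equimatchable and factor-critical, and such that $G+e$ is equimatchable for every $e\in E(\overline{G})$; I want to conclude that the independence number of $G$ is at most $2$. I argue by contradiction: suppose $I=\{a,b,c\}$ is an independent set of size $3$. The natural strategy is to pick a non-edge, say $e=ab$, form the equimatchable graph $G+e$, and exploit the interaction between a maximal matching of $G$ and the new edge. Since $I$ is independent in $G$, any maximal matching of $G$ that leaves all three vertices of $I$ uncovered would contradict equimatchability and factor-criticality of $G$ (such a matching leaves three uncovered vertices, yet a maximum matching of a factor-critical graph leaves exactly one). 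So I would first try to construct a matching of $G$ isolating all of $\{a,b,c\}$ simultaneously, which already contradicts equimatchability of $G$ alone; if that construction always succeeds, independence number $3$ is impossible and we are done.

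\medskip

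\noindent\textbf{Main obstacle and how to handle it.} The difficulty is that one cannot in general isolate all three vertices of an independent set by a single maximal matching of a factor-critical graph, because the vertices of $V(G)\setminus I$ together with the connectivity constraints may force some of $a,b,c$ to be covered. This is precisely where the extra hypothesis about $G+e$ is needed. The plan is to use the added edge to ``absorb'' one of the three problematic vertices: working in $G+e$ with $e=ab$, I construct a maximal matching of $G+e$ that uses the edge $ab$ and still leaves $c$ together with one further vertex uncovered, contradicting equimatchability of $G+e$ (which, being odd, must leave exactly one vertex uncovered by Proposition~\ref{prop:in-2-equimatchable}). The heart of the argument is showing that a maximal matching leaving an appropriate pair of vertices uncovered actually exists; here I would extend any matching on $G-\{a,b,c\}$ greedily and verify, using that $\{a,b,c\}$ is independent and that only the single new edge $e$ was added, that both the surviving uncovered vertices remain uncovered in a maximal extension. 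Balancing the parity of the uncovered set so that the contradiction lands in exactly one of $G$ or $G+e$ will be the delicate bookkeeping step, and I expect it to be the main technical hurdle of the proof.
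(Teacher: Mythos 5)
Your forward direction is correct and is exactly the paper's argument: independence number at most $2$ is preserved under adding edges, so Propositions~\ref{prop:in-2-equimatchable} and~\ref{prop:oddIS2} apply to $G$ and to every $G+e$.

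The reverse direction, however, has a genuine gap: it is a plan rather than a proof, and the plan as stated points at the wrong non-edge. You propose to add $e=ab$ joining two vertices of the independent set $\{a,b,c\}$, use $ab$ in a matching, and then find a maximal matching of $G+ab$ leaving $c$ and ``one further vertex'' uncovered. You yourself flag the parity problem: $G+ab$ is odd, so every maximal matching misses an odd number of vertices, and to contradict equimatchability you need a maximal matching missing at least three \emph{pairwise non-adjacent} vertices. Nothing in your sketch produces a third uncovered vertex non-adjacent to the other two, and extending a matching of $G-\{a,b,c\}$ ``greedily'' gives no control over which vertices survive uncovered. This is precisely the step you defer as ``the main technical hurdle,'' i.e.\ the step that is missing.

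The idea that closes the gap is to choose the added edge \emph{after} fixing a matching, not before. Since $G$ is factor-critical, $G-c$ has a perfect matching $M$; let $a'$ and $b'$ be the $M$-partners of $a$ and $b$ (independence of $\{a,b,c\}$ guarantees $a'\notin\{a,b,c\}$ and $b'\notin\{a,b,c\}$, and $a'\ne b'$). Set $M'=(M\setminus\{aa',bb'\})\cup\{a'b'\}$. Then $M'$ leaves uncovered exactly $a$, $b$, $c$, which are pairwise non-adjacent, so $M'$ is maximal wherever it lives; and it has one edge fewer than $M$, which misses only $c$. If $a'b'\in E(G)$ this already contradicts equimatchability of $G$; if $a'b'\in E(\overline{G})$ it contradicts equimatchability of $G\cup\{a'b'\}$, which is where the extra hypothesis is finally used. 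Note that the parity bookkeeping you worried about disappears: removing two matching edges and inserting one automatically turns a near-perfect matching into a maximal one with three uncovered vertices.
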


\begin{proof}
If $G$ has independence number $2$, then it is equimatchable and factor-critical by Proposition~\ref{prop:in-2-equimatchable} and \ref{prop:oddIS2}.
Clearly, for any edge $e$ of the complement $\overline{G}$ of $G$ the graph $G\cup \{e\}$ has again independence number 2 and thus it is equimatchable by Proposition~\ref{prop:in-2-equimatchable}.
In the reverse direction, assume that $G$ is equimatchable  factor-critical and that $G\cup\{e\}$ is equimatchable for every edge of the complement $\overline{G}$ of $G$. 
For a contradiction suppose that $G$ has an independent set $\{x,y,z\}$ of size $3$. Since $G$ is factor-critical, $G-z$ has a perfect matching $M$. Let $x'$ and $y'$ be the vertices matched with $x$, respectively $y$, by $M$. 
Mote that since $\{x,y,z\}$ is an independent set we have $x\neq x'$ and $y\neq y'$.
It is easy to see that $M' = (M-\{xx',yy'\})\cup \{x'y'\}$ is a maximal matching of $G\cup\{x'y'\}$ which leaves uncovered precisely three vertices. 
On the other hand, $M$ is a matching of $G\cup \{x'y'\}$  leaving uncovered precisely one vertex. 
It follows that $M'$ is a maximal matching of $G\cup\{x'y'\}$, which is not maximum, contradicting equimatchability of $G\cup \{x'y'\}$. We conclude that the independence number of $G$ is at most 2, which completes the proof.
\end{proof}

Equimatchable graphs $G$ such that $G\cup\{e\}$ is equimatchable for every edge $e$ of the complement $\overline{G}$ of $G$ are further investigated in \cite{EK:extremal}, together with other extremal classes of equimatchable graphs.

\medskip
Our final two results show that Lemma~\ref{lemma:efc-in-2}, and thus also Theorem~\ref{thm:efc-iff}, can  be extended neither to equimatchable graphs without two components with at least $3$ vertices, nor to the case of graphs with connectivity $3$.

\begin{prop}
For every triple of integers $n,k,$ and $m$ such that $k\ge 3$ and $m\in\{1,2\}$ there is a $k$-connected equimatchable factor-critical graph $G$ with 
an independent set of size at least $n$ and 
a $k$-cut $S$ such that $G-S$ has a component of size $m$.
\end{prop}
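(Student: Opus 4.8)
The plan is to construct, for each triple $(n,k,m)$ with $k\ge 3$ and $m\in\{1,2\}$, an explicit graph built from a large complete or complete bipartite "core" that will supply both equimatchability and factor-criticality, a cut $S$ of size exactly $k$, and a component of the prescribed size $m$ on the other side of $S$, while forcing the independence number to grow with $n$. Since the whole point is to defeat Lemma~\ref{lemma:efc-in-2}, the independent set of size $n$ must live among vertices that each see only a controlled portion of the graph; the natural place to hide it is inside the cut $S$ together with a small attached component, arranged so that any two of the $n$ witnesses are nonadjacent yet the factor-critical and equimatchable properties survive.

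Concretely, I would take one large component $C$ to be a complete graph (or complete bipartite graph, chosen by parity) big enough that $C$ together with the cut behaves like a randomly matchable block after the cut is saturated, exactly as in the structural theorems; take the small component $D$ to be $K_m$ with $m\in\{1,2\}$; and take $S$ to be a $k$-cut whose internal adjacency is \emph{sparse}, so that $S$ contains a large independent set. The edges would be: all edges inside $C$, the single edge of $D$ when $m=2$, every edge between $S$ and $C$ (to guarantee $k$-connectivity and to let $C$ absorb leftover vertices of $S$ in any matching), and just enough edges between $S$ and $D$ to keep $G$ connected and factor-critical. The independent set of size $n$ would be realized by choosing $S$ (or $S$ together with the single vertex of $D$ when $m=1$) to contain $n$ pairwise nonadjacent vertices; making $k$ large or padding $S$ appropriately lets $n$ be arbitrary. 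I would verify $k$-connectivity directly from the fact that $S$ is a cut of size $k$ and every vertex outside $S$ is joined to all of $S$ (in $C$) or to enough of $S$ (in $D$).

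The two properties to check are factor-criticality and equimatchability. For factor-criticality I would delete an arbitrary vertex $v$ and exhibit a perfect matching of $G-v$ by a short case analysis on whether $v$ lies in $C$, in $D$, or in $S$, using the complete (or near-complete-bipartite) structure of $C$ to saturate the surplus of $S$ via the all-edges $S$--$C$ adjacency, precisely the mechanism used in Proposition~\ref{prop:many-components}. For equimatchability I would argue that any maximal matching $M$ extends to one missing a single vertex: after removing the at most two special edges (the edge of $D$, and possibly one edge touching a sparse vertex of $S$), what remains is a subgraph isomorphic to, or containing, an equimatchable complete or complete bipartite graph such as $K_{k+1,k}$ or $K_{2p}$, so $M$ restricted there extends correctly, again mirroring the split used in Proposition~\ref{prop:many-components}. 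The independence number claim is then immediate, since the $n$ chosen witnesses are pairwise nonadjacent by construction.

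The main obstacle I anticipate is reconciling three competing parity and size demands simultaneously: the graph must be odd (to be factor-critical), the cut must be exactly a minimum cut of size $k$ (not smaller, which forces enough $S$--outside edges to block cheaper cuts), and the sparse part of $S$ carrying the large independent set must remain matchable in \emph{every} maximal matching so that no maximal matching is stranded with three or more uncovered vertices. The delicate point is that allowing many mutually nonadjacent cut vertices risks creating a maximal matching that isolates three of them at once, which would break equimatchability; the fix is to ensure each such vertex is adjacent to the entire large complete component $C$, so $C$ always has room to absorb them, and to control $|C|$ modulo $2$ so the leftover count is always exactly one. Once that balancing is set up, the verification should reduce to the same two-case matching argument already carried out in Proposition~\ref{prop:many-components}, adapted to the larger independent set.
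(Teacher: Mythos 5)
Your plan has a decisive quantifier problem: you propose to realize the independent set of size $n$ inside $S$ (possibly together with the small component $D$), and to ``make $k$ large or pad $S$'' to accommodate arbitrary $n$. But in the statement the triple $(n,k,m)$ is universally quantified, so $k$ is \emph{given} (e.g.\ $k=3$) and $n$ may be arbitrarily large, while the cut must have size exactly $k$. In your construction every vertex of $S$ is adjacent to all of the large complete component $C$, and $D=K_m$ is a clique, so every independent set is contained in $S\cup D$ up to one extra vertex; its size is therefore bounded by roughly $k+m\le k+2$ and cannot reach $n$. This is not a fixable detail of your layout --- the large independent set simply cannot live in the cut. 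The paper's construction places it in the \emph{large component} instead, by taking that component to be complete bipartite: for $m=1$ it takes $H=K_{l,l}$ with $l=\max\{n,k\}$, chooses $S$ as $k$ vertices of $H$ meeting both partite sets, and attaches a new vertex $v$ joined to all of $S$; for $m=2$ it takes $K_{l,l+1}$, lets $S$ be $k$ vertices of the larger partite set, and attaches a $K_2$ joined to all of $S$. A partite set of the bipartite core then supplies the independent set of size $l\ge n$.

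A second, independent obstruction to your primary variant ($C$ complete, $S$ independent, $m=2$): by Theorem~\ref{thm:kCutEFC}, in a $k$-connected equimatchable factor-critical graph whose $k$-cut $S$ is independent and separates a component of size $2$ from a component $C$ with $|C|\ge k$, the graph induced by $C\cup S$ must become complete \emph{bipartite} after adding a partial matching; a complete $C$ with $|C|\ge k\ge 3$ contains a triangle and is therefore impossible. So the graph you describe for $m=2$ would not be equimatchable to begin with. Your parenthetical ``or complete bipartite graph, chosen by parity'' gestures in the right direction, but the bipartite choice is forced by the structure theory and is precisely what makes the large independent set available --- it is not a parity adjustment.
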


\begin{proof}
First assume that $m=1$.
Let $l = \max\{n,k\}$ and 
denote by $H$ 
a copy of $K_{l,l}$. Choose a set $S$ of $k$ vertices of $H$ in such a way that $S$ contains at least one vertex from each partite set of $H$. The desired graph $G$ is constructed by taking a new vertex $v$ and joining it with every vertex in $S$.
Clearly, $G$ is $k$-connected and $S$ is a $k$-cut of $G$.
Since $\{v\}$ is a component of $G-S$ and $m=1$, the graph $G-S$ has a component with $m$ vertices.
Furthermore, it is easy to directly verify that $G$ is factor-critical and equimatchable.
The proof of this case is concluded by observing that each partite set of $H$ forms an 
independent set of $G$ with size $l\ge n$.

Now we assume that $m=2$.
Let $l = \max\{n,k\}$ and 
denote by $H_1$ 
a copy of $K_{l,l+1}$ and by $H_2$ a copy of $K_2$.
Denote by $S$ a set of $k$ vertices from the larger partite set of $H_1$. 
The desired graph $G$ is constructed by joining both vertices of $H_2$ with all vertices of $S$.
It can be easily verified that the resulting graph  is $k$-connected, equimatchable, and factor-critical.
Clearly $S$ is a $k$-cut of $G$ such that $G-S$ has a component with $m$ vertices. Finally,  $G$ contains an independent set with $l+1 \ge n$ vertices, which completes the proof.
\end{proof}

\begin{prop}
For every pair of odd integers $m$ and $n$ such that $m+n\ge 4$ there is a $3$-connected equimatchable factor-critical graph $G$ with independence number $3$ and a $3$-cut $S$ such that $G-S$ has two components with sizes $m$ and $n$, respectively.
\end{prop}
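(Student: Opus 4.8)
The plan is to produce, for every admissible pair $(m,n)$, a single explicit graph and then check the four required properties directly. I would take $S=\{s_1,s_2,s_3\}$ to be an \emph{independent} set, let $C$ and $D$ be disjoint cliques on $m$ and $n$ vertices, and join every vertex of $S$ to every vertex of $C\cup D$, adding no edges inside $S$ and none between $C$ and $D$; equivalently, $G$ is the join of $\overline{K_3}$ with $K_m\cup K_n$. By construction $S$ is a cut and $G-S=C\cup D$, so $G-S$ has exactly the two components $C$ and $D$, of sizes $m$ and $n$. (Note that this is consistent with Theorem~\ref{thm:Cgek-Dge3}, which forces both components to be complete.)

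The elementary properties come first. For $3$-connectivity I would observe that any two vertices omit at least one $s_i$, that this surviving $s_i$ is adjacent to all of $C\cup D$, and that the remaining cut vertices attach to it through $C\cup D$ (which has at least $4$ vertices); hence $G$ minus any two vertices is connected, while $S$ is a cut of size $3$ and every vertex has degree at least $3$ (the extreme case being a vertex of a singleton component, whose degree is exactly $3$). For the independence number I would use that $C$ and $D$ are cliques and that each $s_i$ is adjacent to all of $C\cup D$; consequently any independent set lies entirely in $S$ or entirely in $C\cup D$, so it has size at most $3$, with $S$ the \emph{unique} independent set of size $3$. This pins the independence number to $3$.

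Factor-criticality is a routine matching construction organized by the location of the deleted vertex, the only real content being a parity count: the three vertices of $S$ must be matched into $C$ and $D$ so that the leftovers of both (odd) cliques stay even. If $v\in S$, I match the two surviving $s_j$ to the single leftover vertices of near-perfect matchings of $C$ and of $D$. If $v\in C$ (the case $v\in D$ being symmetric), then, writing $a$ for the number of $s_i$ sent into $C-v$, evenness of the two remainders forces $a$ to be even, so $a\in\{0,2\}$; the hypothesis $m+n\ge 4$ with $m,n$ odd guarantees that at least one component has at least $3$ vertices, which is exactly what is needed to realize one of the two routings (all of $S$ into $D$, or two vertices of $S$ into $C-v$ and one into $D$).

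The decisive step, and the only one that really uses the oddness of both $m$ and $n$, is equimatchability. As in the proof of Proposition~\ref{prop:in-2-equimatchable}, the set left uncovered by any maximal matching is independent, and since $|V(G)|=m+n+3$ is odd this set has odd size; being of size at most $3$ it has size $1$ or $3$. A maximal matching leaving three vertices uncovered would have to leave precisely the unique triple $S$ uncovered, and this occurs if and only if $G-S$ has a perfect matching. But $G-S=K_m\cup K_n$ is the disjoint union of two \emph{odd} cliques and therefore has no perfect matching. Hence every maximal matching leaves exactly one vertex uncovered and is maximum, so $G$ is equimatchable and factor-critical with independence number $3$. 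The main obstacle is recognizing the correct construction: taking $S$ to be independent (rather than a clique, which would collapse the independence number to $2$) makes $S$ itself the size-$3$ independent set, while the oddness of the two cliques $C$ and $D$ is precisely what prevents this triple from being the defect of a maximal matching.
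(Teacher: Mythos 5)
Your construction is exactly the paper's: $S$ an independent triple joined completely to the disjoint odd cliques $C=K_m$ and $D=K_n$, and your verifications of $3$-connectivity, the independence number, and factor-criticality (the parity bookkeeping on how many vertices of $S$ are routed into each clique, using $m+n\ge 4$ to guarantee a feasible routing) match the paper's in substance. The one step where you genuinely diverge is equimatchability. The paper argues directly on an arbitrary maximal matching $M$: completeness of $C$ and $D$ forces $M$ to miss at most one vertex of each, and a short case analysis (uncovered vertex in $C$, in $D$, or neither) combined with the parity of the two cliques shows $M$ misses exactly one vertex. You instead observe that the uncovered set of a maximal matching is independent, hence has size at most $3$; it has odd size since $|V(G)|$ is odd; and the only independent triple is $S$ itself, which cannot be the deficiency set because that would require a perfect matching of $K_m\cup K_n$ with $m,n$ odd. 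Your route is shorter and makes the role of the independence-number computation do double duty, at the price of needing the uniqueness of the maximum independent set (which you do establish); the paper's route is more pedestrian but self-contained and does not rely on having first pinned down all independent triples. Both are correct; the difference is one of economy rather than of idea. One cosmetic caveat: at the very end you write that the conclusion "every maximal matching misses exactly one vertex" gives equimatchability \emph{and} factor-criticality — factor-criticality does not follow from that statement alone, but since you proved it separately by the explicit matching construction, nothing is missing.
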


\begin{proof}
For any given pair of positive odd integers  $m$ and $n$ we  construct a graph $G(m,n)$ with the required properties as follows. 
Let $C$, respectively  $D$, be a copy of the complete graph on $m$ and $n$ vertices, respectively and let $S$ be an independent set on $3$ vertices. To obtain $G(m,n)$ we join every vertex of $C\cup D$ with every vertex of $S$. Since $m+n\ge 4$, the graph $G(m,n)$ is $3$-connected. 

To prove that $G(m,n)$ is factor-critical, first let $v\in(C\cup D)$ and let $G'=G(m,n)-v$. It is easy to see that  there is a set $M$ of  $3$ independent edges  between $S$ and $(C\cup D)-v$ such that  $G'-V(M)$ consists of two even complete graphs. Therefore, $M$ can be extended to a perfect matching of $G'$. 
If $v\in S$, then there is an edge $sc$ between $S-v$ and $C$ and an edge $s'd$, independent from $sc$, between $S-v$ and $D$. Since removing $\{s,s',c,d \}$ from $G-v$ yields two even complete  components,
$\{sc,s'd\}$ can be extended to a perfect matching of $G-v$, which in turn is factor-critical, as claimed.

In the rest of the proof we show that $G(m,n)$ is equimatchable.
Let $M$ be a maximal matching of $G(m,n)$.
Since $C$ and $D$ are complete, $M$ leaves uncovered at most one vertex of each $C$ and $D$.
Assume that  $M$ leaves uncovered a vertex $c$ from $C$. Since  $M$ is maximal, then clearly $M$ must cover all vertices of $S$. Therefore, $D-V(M)$ is an even complete graph and thus $M$ covers all vertices of $D$ by its maximality. We conclude that $c$ is the only vertex of $G(m,n)$ uncovered by $M$ and hence $M$ is a maximum matching. Analogous argument also shows that if $M$ leaves uncovered a vertex from $D$, then $M$ is maximum.

Therefore, we can assume that $M$ covers all vertices from $C\cup D$. Since both $C$ and $D$ are odd, to cover all vertices of $C\cup D$ the matching $M$ has to cover  precisely two vertices of $S$. Consequently, $M$ leaves uncovered exactly one vertex of $S$ and $M$ is maximum, as required.
Since both $C$ and $D$ are odd,  if $M$  does not leave uncovered exactly one vertex of $S$ leaves uncovered also at least one vertex of $C\cup D$, and hence $M$ cannot be a maximal matching of $G(m,n)$.
\end{proof}

\subsection*{Acknowledgement}
Research reported in this paper was partially supported by 
Ministry of Education, Youth, and Sport of Czech Republic, Project No.~CZ.1.07/2.3.00/30.0009.

\small
\end{document}